\documentclass[12pt]{amsart}

\usepackage{amsfonts,amsthm,amsmath,amssymb,amscd,mathrsfs,tikz}
\usepackage{graphics}
\usepackage{indentfirst}
\usepackage{cite}
\usepackage{latexsym}
\usepackage[dvips]{epsfig}

\usepackage{color}

%% \usepackage{leqno, showkeys}
%%%%%%%%%%%%%%%%%%%%%%%%%%%%%%%%
\setlength{\paperheight}{11in}
\setlength{\paperwidth}{8.5in}
\addtolength{\voffset}{-0.25in}
\addtolength{\hoffset}{-0.75in}
\setlength{\textwidth}{6.5in}
\setlength{\textheight}{8.35in}
\setlength{\footskip}{36pt}
\setlength{\marginparsep}{0pt}
\setlength{\marginparwidth}{0in}
\setlength{\headheight}{8pt}
\setlength{\headsep}{20pt}
\setlength{\oddsidemargin}{0.75in}
\setlength{\evensidemargin}{0.75in}

%\normalsize

%%%%%%%%%%%%%%%%%%%%%%%%%%%%%%%%%%%%%%%%%%%%%%
\newtheorem{theorem}{Theorem}[section]
\newtheorem{remark}{Remark}[section]
\newtheorem{definition}{Definition}[section]
\newtheorem{lemma}[theorem]{Lemma}
\newtheorem{pro}[theorem]{Proposition}

%%%%%%%%%%%%%%%%%%%%%%%%%%%%%%%%%%%%%%%%%

%%%%%%%%%%%%%%%%%%%%%%%%%%%%%%%%%%%%%%%%%%
%%% Local Macros
%%%

\renewcommand{\div}{{\rm div }}

\newcommand{\bt}{\begin{theorem}}
\newcommand{\bl}{\begin{lemma}}
\newcommand{\el}{\end{lemma}}
\newcommand{\et}{\end{theorem}}

\newcommand{\bn}{\begin{eqnarray}}
\newcommand{\en}{\end{eqnarray}}
\newcommand{\bnn}{\begin{eqnarray*}}
\newcommand{\enn}{\end{eqnarray*}}
%\renewcommand{\theequation}{\thesection.\arabic{equation}}

%% Huang's Symbols
\newcommand{\ba}{\begin{aligned}}
\newcommand{\ea}{\end{aligned}}
\newcommand{\be}{\begin{equation}}
\newcommand{\ee}{\end{equation}}

\newcommand{\R}{\mathbb{R}}

%%%%%%%%%%%%%%%%%%%%%%%%%%%%%%%%%%%%%%%%%%%%%%%%%%%%%%%%%%%%%%
%%%%%%%%%%%%%%%%%%%%%%%%%%%%%%%%%%%%%%%%%%%%%%%%%%%%%%%%%%%%%%
%%%%%%%%%%%%%%%%%%%%%%%%%%%%%%%%%%%%%%%%%%%%%%%%%%%%%%%%%%%%%%
%%%%%%%%%%%%%%%%%%%%%%%%%%%%%%%%%%%%%%%%%%%%%%%%%%%%%%%%%%%%%%
\newcommand{\Bx}{{\boldsymbol{x}}}

\newcommand{\Bv}{{\boldsymbol{v}}}
\newcommand{\Bnu}{{\boldsymbol{\nu}}}

\newcommand{\Bn}{{\boldsymbol{n}}}
\newcommand{\Bt}{{\boldsymbol{\tau}}}
\newcommand{\Bu}{{\boldsymbol{u}}}

\newcommand{\Be}{{\boldsymbol{e}}}

\newcommand{\BU}{\boldsymbol{U}}

\newcommand{\Bg}{{\boldsymbol{g}}}
\newcommand{\Bw}{{\boldsymbol{w}}}
\newcommand{\BD}{{\boldsymbol{D}}}
\newcommand{\Ba}{{\boldsymbol{a}}}

\newcommand{\Bp}{{\boldsymbol{\phi}}}
\newcommand{\Bh}{{\boldsymbol{h}}}

\newcommand{\mcH}{\mathcal{H}}

\newcommand{\mfc}{\mathfrak{c}}
\newcommand{\mfD}{\mathfrak{D}}

%%%%%%%%%%%%%%%%%%%%%%%%%%%%%%%%%%%%%%%%%%%%%%%%%%%%%%%%%%%%%%
%%%%%%%%%%%%%%%%%%%%%%%%%%%%%%%%%%%%%%%%%%%%%%%%%%%%%%%%%%%%%%
%%%%%%%%%%%%%%%%%%%%%%%%%%%%%%%%%%%%%%%%%%%%%%%%%%%%%%%%%%%%%%%%
\begin{document}

\title[Two-dimensional flows with slip boundary condition]
{On the Leray problem for steady flows in two-dimensional infinitely long channels with slip boundary conditions}

\author{Kaijian Sha}
\address{School of mathematical Sciences, Shanghai Jiao Tong University, 800 Dongchuan Road, Shanghai, China}
\email{kjsha11@sjtu.edu.cn}

\author{Yun Wang}
\address{School of Mathematical Sciences, Center for dynamical systems and differential equations, Soochow University, Suzhou, China}
\email{ywang3@suda.edu.cn}

\author{Chunjing Xie}
\address{School of mathematical Sciences, Institute of Natural Sciences,
Ministry of Education Key Laboratory of Scientific and Engineering Computing,
and CMA-Shanghai, Shanghai Jiao Tong University, 800 Dongchuan Road, Shanghai, China}
\email{cjxie@sjtu.edu.cn}

\begin{abstract}
In this paper, we investigate the Leray problem for steady Navier-Stokes system with full slip boundary conditions in a two-dimensional channel with straight outlets. The existence of solutions with arbitrary flux in a general channel supplemented with slip boundary conditions, which tend to the associated shear flows at far fields, is established. Furthermore, if the flux is suitably small, the solution is proved to be unique. One of the crucial ingredients is to construct an appropriate flux carrier and to show a Hardy type inequality for flows with full slip boundary conditions.
\end{abstract}

\keywords{}
\subjclass[2010]{%\AMSMOS
35Q30, 35J67, 76D05,76D03}

\thanks{Updated on \today}

\maketitle

\section{Introduction}
An interesting and important problem in mathematical fluid mechanics is to study the solutions of the steady Navier-Stokes system
\begin{equation}\label{NS}
\left\{
\begin{aligned}
&-\Delta \Bu+\Bu\cdot \nabla \Bu +\nabla p=0 ~~~~&\text{ in }\Omega,\\
&{\rm div}~\Bu=0&\text{ in }\Omega,
\end{aligned}\right.
\end{equation}
in a channel domain $\Omega$, where the unknown function $\Bu=(u_1,\cdots,u_N)$ is the velocity and $p$ is the pressure. If the boundary condition $\Bu \cdot \Bn =0$ is prescribed, then the flux
\[
\Phi=\int_{\Sigma} \Bu \cdot \Bnu \,ds
\]
is a conserved quantity along each cross section $\Sigma$ of the channel, where $\Bnu$ is the unit normal of  $\Sigma$ pointing to the same direction.
 If $\Omega$ is a channel type domain with straight outlets at far fields, in 1950s, Leray proposed the problem to look for solutions for Navier-Stokes system \eqref{NS} with no-slip boundary conditions under the constraint that
\begin{equation}\label{far field}
	\Bu\to \BU \ \ \ \ \ \text{at far fields,}
\end{equation}
 where $\BU$ is the shear flow solution of Navier-Stokes system with flux $\Phi$ in the corresponding straight channel with no-slip boundary conditions. The problem is called Leray problem nowadays.  Without loss of generality, the flux $\Phi$ is always assumed to be nonnegative in this paper.

The major breakthrough on the Leray problem in infinitely long channels was made by Amick \cite{A1,A2,AF}, Ladyzhenskaya and Solonnikov \cite{LS}. It was proved in \cite{A1, LS} that Leray problem in a channel is solvable as long as the flux is small. Actually, the existence of solutions with arbitrary flux  was also proved in \cite{LS}. However, the far field behavior and  uniqueness of such solutions are not clear when the flux is large. The far field behavior of solutions was studied in \cite{AF}. One can refer to \cite{Ga,KP,NP1,NP2} for the further studies on far field behavior of flows and the detailed progress on Leray problem. To the best of our knowledge, there is no result on the far field behavior of solutions of steady Navier-Stokes system with large flux except for the axisymmetric solutions in a pipe studied in \cite{WX2}.

For viscous flows near solid boundary, besides the no-slip boundary condition,
 the Navier boundary conditions
\begin{equation}\label{BC0}
	\Bu\cdot \Bn=0,~~~\ \ \ \ (\Bn\cdot \BD(\Bu)+\alpha \Bu)\cdot \Bt=0~\text{ on }\partial\Omega,
\end{equation}
are also usually used, which were suggested by Navier \cite{Na} for the first time. Here $\BD(\Bu)$ is the strain tensor defined by
\[
 (\BD(\Bu))_{ij}=(\partial_{x_j}u_i + \partial_{x_i} u_j )/2,
\] and $\alpha\ge 0$ is the friction coefficient which measures the tendency of a fluid to slip over the boundary. $\Bt$ and $\Bn$ are the unit tangent and outer normal vector on the boundary $\partial\Omega$, respectively. If $\alpha=0$, \eqref{BC0} is also called the full slip boundary conditions. If $\alpha\to \infty$, the boundary conditions \eqref{BC0} formally reduces to the classical no-slip boundary conditions.

The Navier-Stokes system with Navier slip boundary condition has been widely studied in various aspects. One may refer to \cite{Bei1,CMR,DL,DLX,IS,Ke,LZ,MR,APS,Ga,GL,JM,WWX,XX} for some important results on nonstationary problem.
For the stationary problem, the existence and regularity of the solutions were first studied in \cite{SS}, where the Dirichlet condition and the full slip condition are imposed on different parts of the boundary of a three-dimensional interior or exterior domain. It is noteworthy that  the existence and the regularity for solutions of a generalized Stokes system with Navier boundary conditions were investigated in \cite{Bei} in some regular domain. The existence and uniqueness of very weak, weak, and strong solutions have been proved in appropriate Banach spaces in \cite{Ber}. In \cite{AR}, the existence, uniqueness, and regularity of solutions to the stationary Stokes system and also to the Navier-Stokes system with the full slip condition in both Hilbert space and $L^p$ space has been investigated. Recently,  the stationary Stokes and Navier-Stokes system with nonhomogeneous Navier boundary conditions in a bounded  three-dimensional domain  were studied in \cite{AACG}, where the existence and uniqueness for weak and strong solutions in $W^{1,p}$ and $W^{2,p}$ spaces have been established, respectively, even when the friction coefficient $\alpha$ is generalized to a function. Furthermore, the behavior of these solutions was also investigated when $\alpha$ tends to infinity (\hspace{1sp}\cite{AACG}). For more issues on the Navier slip boundary conditions, one may refer to \cite{Co,Bei2,Me}.

For flows in a nozzle with Navier-slip boundary condition, the flux across each section is also a constant, and the associated Leray problem has been studied by \cite{Mu1,Mu2,Mu3,Ko,LPY} and references therein.  In the case of three-dimensional pipes with straight outlets, a weak solution of the Navier-Stokes system with arbitrary flux has been  obtained in \cite{Ko}, which satisfies mixed boundary condition and the far field behavior \eqref{far field}. Very recently, Leray problem for flows in a pipe with Navier boundary condition was solved in \cite{LPY}, as long as the flux $\Phi$ is small and the nozzle becomes straight at large distance.

For flows in general two-dimensional channels with straight outlets, it was also proved  in \cite{Mu2} that the Navier-Stokes system has a smooth solution  with  arbitrary flux if
\[
\|\alpha-2\chi\|_{L^\infty(\partial\Omega)}\leq C(\Omega),
\] where $\chi$ is the curvature of the boundary and $C(\Omega)$ is a constant depending only on $\Omega$. However, the far behavior is not known even when the flux is small. In \cite{Mu1}, Leray problem \eqref{NS}-\eqref{BC0} with friction coefficient $\alpha=0$ was solved for any flux provided that the two-dimensional channel has straight upper boundary and coincides with the straight channel at far field. Then the exponential convergence rate of the velocity was studied in \cite{Mu3}. It's worth noting that the Dirichlet norm of the solution is finite since the corresponding shear flow $\BU$ is a constant flow in the case $\alpha=0$. The existence of solutions in a general two-dimensional channel, which may even have unbounded width, with Navier-slip boundary conditions was established in \cite{SWX2022slip} when the friction coefficient $\alpha$ is positive. When the channel tends to be flat at far fields, the uniqueness and asymptotic behavior of solutions was also established when the flux is sufficiently small (\hspace{1sp}\cite{SWX2022slip}).

In this paper, we study Leray problem with full slip boundary conditions, i.e.
\begin{equation}\label{BC}
	\Bu\cdot \Bn=0,~\ \ \ \Bn\cdot \BD(\Bu)\cdot \Bt=0~\text{ on }\partial\Omega,
\end{equation}
in a more general two-dimensional channel $\Omega$ (See Figure 1) of the form
\begin{equation}\label{defOmega}
\Omega=\{(x_1,x_2):x_1\in \mathbb{R},~f_1(x_1)<x_2<f_2(x_1)\}.
\end{equation}
Without loss of generality, assume that $f_1$ and $f_2$ are smooth functions satisfying
\begin{equation*}
f_2(t)=1 \ \ \ \text{ and }\ \ \ f_1(t)=-1  \ \ \ \text{for any }|t|\ge L,
\end{equation*}
where $L$ is a constant. The straightforward computations show that the shear flows $\BU$ in $\hat{\Omega}=\{(x_1,x_2):~x_1\in \R,~x_2\in (-1,1)\}$
with full slip boundary conditions and flux $\Phi$, i.e.,
\begin{equation*}
\int_\Sigma \BU\cdot \Bnu \,ds=\Phi,
\end{equation*}
are of the form $\BU=\frac{\Phi}{2}\Be_1$.

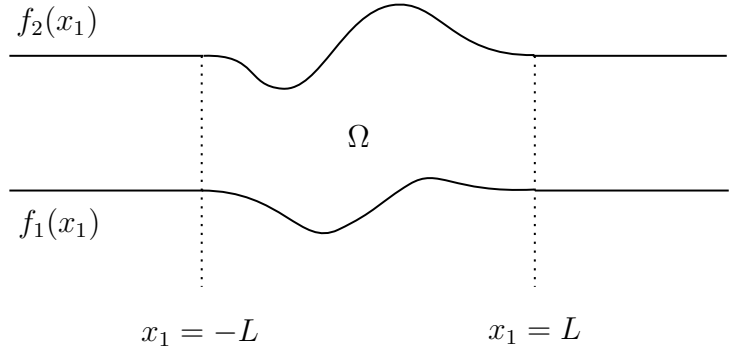
\begin{figure}[h]
\centering
\tikzset{every picture/.style={line width=0.75pt}} %set default line width to 0.75pt

\begin{tikzpicture}[x=0.75pt,y=0.75pt,yscale=-1,xscale=1]
%uncomment if require: \path (0,300); %set diagram left start at 0, and has height of 300

%Straight Lines [id:da9155338035054645]
\draw    (100,109) -- (197.8,109) ;
%Straight Lines [id:da3718308938227026]
\draw    (364,109) -- (461.8,109) ;
%Straight Lines [id:da30296233451105703]
\draw    (100,177) -- (197.8,177) ;
%Straight Lines [id:da031051546611112313]
\draw  [dash pattern={on 0.84pt off 2.51pt}]  (197,109) -- (197,187.6) -- (197,225.6) ;
%Straight Lines [id:da8778130157772024]
\draw  [dash pattern={on 0.84pt off 2.51pt}]  (365,109) -- (365,187.6) -- (365,225.6) ;
%Straight Lines [id:da7649168464597347]
\draw    (365,177) -- (462.8,177) ;
%Curve Lines [id:da72570381443602]
\draw    (198,109) .. controls (225.99,108.11) and (218.94,124.29) .. (237.8,125.6) .. controls (256.66,126.91) and (268.55,87.55) .. (292.6,83.5) .. controls (316.65,79.45) and (322.6,109.5) .. (364.8,108.6) ;
%Curve Lines [id:da5371348339620521]
\draw    (197,177) .. controls (237.8,177) and (246.8,205.8) .. (265.8,196.8) .. controls (284.8,187.8) and (286.8,182.8) .. (301.8,173.8) .. controls (316.8,164.8) and (322.6,178.5) .. (364.6,176.5) ;

% Text Node
\draw (165,240.4) node [anchor=north west][inner sep=0.75pt]    {$x_{1} =-L$};
% Text Node
\draw (340,239.4) node [anchor=north west][inner sep=0.75pt]    {$x_{1} =L$};
% Text Node
\draw (269,142.4) node [anchor=north west][inner sep=0.75pt]    {$\Omega $};
% Text Node
\draw (103,184.4) node [anchor=north west][inner sep=0.75pt]    {$f_{1}( x_{1})$};
% Text Node
\draw (102,82.4) node [anchor=north west][inner sep=0.75pt]    {$f_{2}( x_{1})$};
\end{tikzpicture}

	\caption{The channel $\Omega$}
\end{figure}

We consider the solution of the form $\Bu=\Bv+\Bg$, where $\Bv\in H^1(\Omega)$ and $\Bg$ is a smooth vector field satisfying
\begin{equation}\label{flux carrier}
	\left\{\begin{aligned}
		&\operatorname{div} \Bg=0  &&\text{ in }\Omega,\\
		&\Bg\cdot \Bn=0,~ \Bn\cdot \BD(\Bg)\cdot \Bt=0  &&\text{ on }\partial\Omega,\\
		&\Bg\to \BU=\frac\Phi2 \Be_1  &&\text{ as }|x_1| \to \infty.
	\end{aligned}\right.
	\end{equation}
Using \eqref{NS}-\eqref{far field}, \eqref{BC}, and \eqref{flux carrier}, one has that $\Bv=\Bu-\Bg$ satisfies
	\begin{equation}\label{NS1}
		\left\{
		\begin{aligned}
		&-\Delta \Bv+\Bv\cdot \nabla \Bg +\Bg\cdot \nabla \Bv+\Bv\cdot \nabla \Bv  +\nabla p=\Delta \Bg-\Bg\cdot \nabla \Bg   ~~~~&&\text{ in }\Omega,\\
		&{\rm div}~\Bv=0&&\text{ in }\Omega,\\
		&\Bv\cdot \Bn=0,~\Bn\cdot \BD(\Bv)\cdot \Bt=0&&\text{ on }\partial\Omega,\\
		&\Bv\to 0 &&\text{ as }|x_1|\to \infty.
	\end{aligned}\right.
	\end{equation}

Before giving the main results of this paper, the definitions of some function spaces and the weak solution are introduced.
\begin{definition} 	Given a domain $D \subseteq \mathbb{R}^2$, denote
	\[
	L_0^2(D)=\left\{w(x): w\in L^2(D), \, \int_D w(x)dx =0 \right\}.
	\]
Given $\Omega$ defined in \eqref{defOmega}, define
\begin{equation*}
\mathcal{C}(\Omega)=\left\{\Bu\in C^\infty_c(\overline{\Omega}): ~
\Bu\cdot \Bn=0 \text{ on }\partial\Omega \right\}
\end{equation*}
and
\begin{equation*}
\mathcal{C}_\sigma(\Omega)=\left\{\Bu\in \mathcal{C}(\Omega): ~
\operatorname{div}\Bu=0 \right\}.
\end{equation*}
Let $\mcH(\Omega)$ and $\mcH_\sigma(\Omega)$  be the completions of $\mathcal{C}(\Omega)$ and  $\mathcal{C}_\sigma(\Omega)$ under $H^1$ norm, respectively.

Furthermore, for any constants $a<b$ and $0<T<\infty$, denote
\begin{equation*}
\Omega_{a,b}=\{(x_1,x_2)\in \Omega:a<x_1<b\}\  \text{ and }\ \Omega_T=\Omega_{-T,T}.
\end{equation*}
Define
\begin{equation*}
	\mathcal{C}(\Omega_{a,b})=\left\{\Bu|_{\Omega_{a,b}}:\begin{array}{l} \Bu\in C^\infty(\overline{\Omega}),~\Bu=0\text{ in }\Omega\setminus\Omega_{a,b},\\
		\Bu\cdot \Bn=0 \text{ on }\partial\Omega_{a,b}\cap \partial\Omega\end{array} \right\}
	\end{equation*}
	and
	\begin{equation*}
		\mathcal{C}_\sigma(\Omega_{a,b})=\left\{\Bu \in \mathcal{C}(\Omega_{a,b}):\operatorname{div}\Bu =0 \text{ in } \Omega_{a,b} \right\}.
		\end{equation*}
Let $\mcH(\Omega_{a,b})$ and $\mcH_\sigma(\Omega_{a,b})$  be the completions of $\mathcal{C}(\Omega_{a,b})$ and  $\mathcal{C}_\sigma(\Omega_{a,b})$  under $H^1$ norm, respectively.

Finally, denote $H_*^1(\Omega_{a,b})$ to be the set of functions in $H^1(\Omega_{a,b})$ with zero flux, i.e., for any $\Bv\in H_*^1(\Omega_{a,b})$, one has
\begin{equation}\label{zeroflux}
\int_{f_1(x_1)}^{f_2(x_1)} v_1(x_1, x_2) dx_2=0\quad \text{for any}\,\, x_1\in (a, b).
\end{equation}
\end{definition}
\begin{definition}
Assume that $\Bg$ is a smooth vector field satisfying \eqref{flux carrier}. Then a vector field $\Bu=\Bg+\Bv$ with $\Bv\in \mcH_\sigma(\Omega)$ is said to be a weak solution of the problem \eqref{NS}, \eqref{far field}, and \eqref{BC} if for any $\Bp \in \mcH_\sigma(\Omega)$, 	 $\Bv$ satisfies
\begin{equation}\label{weak solution}
\int_{\Omega}2\BD(\Bv):\BD(\Bp)+(\Bv\cdot\nabla\Bg+(\Bg+\Bv)\cdot\nabla\Bv)\cdot \Bp\,dx
=\int_{\Omega}\Delta \Bg\cdot\Bp-\Bg\cdot\nabla \Bg\cdot \Bp\,dx.
\end{equation}

\end{definition}

Then the main results of this paper can be stated as follows.
\begin{theorem}\label{bounded channel}
	Let $\Omega$ be the domain given in \eqref{defOmega}. Given any flux $\Phi\ge 0$, the Navier-Stokes system \eqref{NS}, \eqref{far field}, and \eqref{BC} has a solution $\Bu=\Bg+\Bv$, where $\Bg$ is a smooth vector field satisfying \eqref{flux carrier} and $\Bv\in \mcH_\sigma(\Omega)$ satisfies
	\begin{equation*}
		\|\Bv\|_{H^1(\Omega)}\leq C_1.
	\end{equation*}
	Furthermore, there exist positive constants $C_2$ and $C_3$ independent of $T$ such that 	for sufficiently large  $T$, one has
	\begin{equation*}
		\|\Bu-\BU\|_{H^1(\Omega\cap\{|x_1|>T\})}\leq C_3e^{-C_2^{-1}T}.
	\end{equation*}
Finally, there exists a $\Phi_0>0$ such that if the flux $\Phi\in [0,\Phi_0)$, the solution $\Bu$ is unique in the class
	\begin{equation*}
		\mathcal{S}=\{\Bw\in H_{loc}^1(\Omega):~\liminf_{t\to \infty}t^{-3}\|\nabla \Bw\|_{L^2(\Omega_t)}^2=0\}.
	\end{equation*}
\end{theorem}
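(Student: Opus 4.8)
The plan is to establish Theorem~\ref{bounded channel} in five stages: (i) construct a suitable flux carrier $\Bg$ and a Hardy-type inequality adapted to \eqref{BC}; (ii) solve the truncated problem on each $\Omega_T$ with a priori bounds uniform in $T$; (iii) pass to the limit $T\to\infty$ and bootstrap regularity to obtain the solution with $\|\Bv\|_{H^1(\Omega)}\le C_3$; (iv) prove the exponential decay by a Saint-Venant energy estimate in the straight part of the channel; (v) prove uniqueness for small flux. I expect stage~(i) to be the main obstacle. For the no-slip Leray problem one uses that $\Bv$ vanishes on $\pa\Omega$, together with the Hardy inequality $\|\Bv/d\|_{L^2}\lesssim\|\na\Bv\|_{L^2}$ ($d$ comparable to the distance to $\pa\Omega$), to build, for any $\ve>0$, a flux carrier making the troublesome quadratic term no larger than $\ve\|\na\Bv\|_{L^2}^2$. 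Under \eqref{BC} this classical Hardy inequality fails, since a field in $H_\sigma(\Omega)$ only has vanishing normal trace; the substitute I would prove is
\[
\left\|\frac{\Bv\cdot\Bn}{d}\right\|_{L^2(\Omega)}\le C\,\|\na\Bv\|_{L^2(\Omega)},\qquad \Bv\in H_\sigma(\Omega),
\]
together with the zero-flux Poincar\'e inequality $\|\Bv\|_{L^2(\Omega)}\le C\|\na\Bv\|_{L^2(\Omega)}$ (valid since every element of $H_\sigma(\Omega)$ has zero flux through each cross-section). The first follows from the scalar Hardy inequality applied to the scalar function $\Bv\cdot\Bn$, which does vanish on $\pa\Omega$, plus Poincar\'e.

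With these in hand, I would construct $\Bg$ as in \eqref{flux carrier}: smooth, equal to $\BU=\tfrac\Phi2\Be_1$ for $|x_1|$ large, and --- here \eqref{BC}, unlike the no-slip condition, allows $\Bg$ not to vanish on $\pa\Omega$ --- tangential to the walls, with the transition from the wall profile to $\BU$ spread over a long axial interval, plus a thin, low-amplitude corrector enforcing $\Bn\cdot\BD(\Bg)\cdot\Bt=0$. Writing $\Bv\cdot\na\Bg\cdot\Bv$ in boundary-fitted coordinates, every term carries either a factor $\Bv\cdot\Bn$, controlled by the Hardy inequality above, or a slowly varying / small-amplitude factor coming from the construction, so that $\left|\int_\Omega\Bv\cdot\na\Bg\cdot\Bv\,dx\right|\le\ve\|\na\Bv\|_{L^2(\Omega)}^2$ for all $\Bv\in H_\sigma(\Omega)$, with $\ve$ as small as we wish; one checks along the way that $\BD(\Bg)$ and $\Bg\cdot\na\Bg$ are bounded and compactly supported in $\overline\Omega$, so the right-hand side of \eqref{NS1} is admissible.

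For existence I would use invading domains: a Galerkin scheme in $\mathcal{C}_\sigma(\Omega_T)$ gives $\Bv_T\in H_\sigma(\Omega_T)$ solving \eqref{weak solution} with test functions in $H_\sigma(\Omega_T)$; testing with $\Bp=\Bv_T$, the full convective term $\int_{\Omega_T}(\Bg+\Bv_T)\cdot\na\Bv_T\cdot\Bv_T\,dx$ vanishes, the term $\int_{\Omega_T}\Bv_T\cdot\na\Bg\cdot\Bv_T\,dx$ is absorbed by stage~(i), and the right-hand side is bounded by $\eta\|\na\Bv_T\|_{L^2}^2+C_\eta(\|\BD(\Bg)\|_{L^2}^2+\|\Bg\cdot\na\Bg\|_{L^2}^2)$. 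A Korn inequality uniform in $T$ (the elementary identity $2\|\BD(\cdot)\|_{L^2}^2=\|\na(\cdot)\|_{L^2}^2$ for divergence-free fields vanishing at $x_1=\pm T$ and with vanishing normal trace in the straight parts, plus the usual Korn inequality on the fixed curved region) and the zero-flux Poincar\'e inequality then give $\|\Bv_T\|_{H^1(\Omega_T)}\le C_3$ uniformly; passing to a weak limit $\Bv_T\rightharpoonup\Bv$ (strong $L^2_{loc}$) one lets $T\to\infty$ in \eqref{weak solution}, and elliptic bootstrapping gives smoothness, proving the first two assertions. For the decay, note that for $|x_1|>L$ the channel is straight, $\Bg=\BU$, and $\Bv$ solves $-\Delta\Bv+\tfrac\Phi2\pa_{x_1}\Bv+\Bv\cdot\na\Bv+\na p=0$, $\div\Bv=0$, with \eqref{BC} on $x_2=\pm1$ and zero flux. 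Setting $g(T)=\|\na\Bv\|_{L^2(\Omega\cap\{|x_1|>T\})}^2$ and testing against $\Bv$ over $\Omega\cap\{|x_1|>T\}$, one obtains $g(T)$ up to cross-sectional integrals over $\{x_1=T\}$ of $|\Bv|^2$, $|\na\Bv|^2$, $p\,v_1$ and the cubic term $v_1|\Bv|^2$; absorbing the cubic term using $\|\Bv\|_{H^1(\Omega\cap\{|x_1|>T\})}\to0$, and estimating the cross-sectional quantities by $-g'(T)$ via a one-dimensional Poincar\'e inequality, a trace inequality, and control of the pressure through the equation, one reaches $g(T)\le C_4(-g'(T))$ for $T$ large, hence $g(T)\le Ce^{-2C_4^{-1}T}$; together with the zero-flux Poincar\'e inequality on the tail this gives the claimed $H^1$ decay of $\Bu-\BU=\Bv$.

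Finally, for uniqueness, let $\Bu_1=\Bg+\Bv_1$ be the solution just constructed and $\Bu_2=\Bg+\Bv_2$ any solution in the stated class; then $\Bw=\Bv_1-\Bv_2$ satisfies $-\Delta\Bw+\Bw\cdot\na\Bg+\Bg\cdot\na\Bw+\Bw\cdot\na\Bv_1+\Bv_2\cdot\na\Bw+\na q=0$, $\div\Bw=0$, \eqref{BC}, zero flux. Testing against $\Bw$ over $\Omega_t$, the terms with $\Bg\cdot\na\Bw$ and $\Bv_2\cdot\na\Bw$ reduce to boundary integrals over $\{|x_1|=t\}$ (divergence freeness and $\Bg\cdot\Bn=\Bv_2\cdot\Bn=0$ on the walls), and one derives a differential inequality for $h(t):=\|\na\Bw\|_{L^2(\Omega_t)}^2$ in which the cubic boundary term contributes a power $(h'(t))^{3/2}$; the restriction $\liminf_{t\to\infty}t^{-3}h(t)=0$ then rules out unbounded $h$ (an unbounded $h$ would grow at least like $t^3$), so $\Bw\in H^1(\Omega)$, and sending $t\to\infty$ along a suitable sequence kills the boundary terms, leaving
\[
2\|\BD(\Bw)\|_{L^2(\Omega)}^2\le\ve\|\na\Bw\|_{L^2(\Omega)}^2+C\|\na\Bv_1\|_{L^2(\Omega)}\|\na\Bw\|_{L^2(\Omega)}^2
\]
by stage~(i), Ladyzhenskaya's inequality and Poincar\'e. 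Since $C_3$, hence $\|\na\Bv_1\|_{L^2(\Omega)}$, is $O(\Phi)$, choosing $\Phi<\Phi_0$ small makes the right-hand side dominated (via Korn's inequality) by the left, forcing $\BD(\Bw)\equiv0$; then $\Bw$ is an infinitesimal rigid motion, and $\Bw\cdot\Bn=0$ on the straight walls together with zero flux forces $\Bw\equiv0$. The crux throughout is stage~(i); stages~(iv)--(v) also require some care in justifying the far-field limits.
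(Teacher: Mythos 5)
Your plan follows essentially the same route as the paper (Hardy-type control of the normal component $v_2-f_2'v_1$ near the wall where the carrier's shear is concentrated, a long axial transition of length $\mfd$, truncated problems with a Korn inequality reduced to the fixed curved piece, a Saint-Venant differential inequality for the decay, and a Ladyzhenskaya--Solonnikov growth alternative for uniqueness), but two steps would not go through as literally written. The first is the pressure. In the decay estimate you propose to bound the cross-sectional term $\int_{\Sigma(T)}p\,v_1\,dx_2$ by $-g'(T)$ ``through the equation''; pointwise in $T$ this is not available from $H^1$ information, since subtracting the cross-sectional mean of $p$ and using a one-dimensional Poincar\'e inequality would require $\partial_{x_2}p$ on $\Sigma(T)$, i.e.\ second derivatives of $\Bv$, which you do not control. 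The paper instead works with a cutoff supported on a unit slab $E^+$ and uses the Bogovskii operator of Lemma \ref{lemmaA5} to solve $\operatorname{div}\Ba=v_1$ with $\Ba\in H^1_0(E^+)$, turning the pressure contribution into $\int_{E^+}\nabla p\cdot\Ba\,dx$, which is then rewritten through the weak formulation; the same device is needed in your uniqueness step to dispose of the boundary term $\int_{\Sigma(\pm t)}q\,w_1$, which you pass over.

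The second, more serious, issue is in the uniqueness argument. To obtain $h(t)\le C\{h'(t)+[h'(t)]^{3/2}\}$ you must handle the bulk term $\int_{\Omega_t}\Bw\cdot\nabla\Bu\cdot\Bw\,dx$ (with $\Bu$ the constructed solution) uniformly in $t$ \emph{before} any passage to the limit; this is precisely where the smallness of $\Phi$ enters in the paper, via the uniform per-unit-slab bounds $\|\Bu\|_{H^1(\Omega_{t-1,t})}+\|\Bu\|_{L^4(\Omega_{t-1,t})}\le C_6\sim\Phi$ of Lemma \ref{uniform estimate} (note the constant $M_4$ in Lemma \ref{lemmaA2} degenerates as $b-a\to\infty$, so one must sum over unit slabs), which allow this term to be absorbed into $\tfrac{\mfc}{2}\int_\Omega\zeta|\nabla\Bw|^2\,dx$. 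Deferring the small-flux absorption to the global identity after the limit, as you do, leaves the differential inequality itself unproved, and with it the claim that the class assumption forces $\Bw\in H^1(\Omega)$. Once the inequality is established, Lemma \ref{lemmaA4} already yields the dichotomy ``$\nabla\Bw\equiv 0$ or $\liminf_{t\to\infty}t^{-3}h(t)>0$'', and the class assumption (together with $\|\nabla\Bu\|_{L^2(\Omega_t)}=O(t^{1/2})$ from Lemma \ref{uniform estimate}) gives $\Bw\equiv0$ directly; your subsequent detour through $\Bw\in H^1(\Omega)$, a global energy identity, Korn's inequality and rigid motions is therefore unnecessary, though not incorrect once the earlier points are repaired.
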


There are a few remarks in order.
\begin{remark}
	The constants $C_1$, $C_2$, and $C_3$ depend only on the flux $\Phi$ and the domain $\Omega$.
\end{remark}
\begin{remark}
	Theorem \ref{bounded channel} provides a positive answer to Leray problem with full slip boundary condition and arbitrary flux.
\end{remark}
\begin{remark}
Theorem \ref{bounded channel} also holds if the channel is not flat at far field. Suppose that there exist $\gamma_1<\gamma_2,\beta$, and $L$ such that
\begin{equation}\label{1-6}
	f_1(t) =-1, \ \ f_2(t)=1 ~~\text{ for any }t\ge L
\end{equation}
and
\begin{equation}\label{1-7}
	f_1(t) =\beta t+\gamma_1, \ \ f_2(t)=\beta t+\gamma_2 ~~\text{ for any }t\leq -L.
\end{equation}
We can also construct the flux carrier $\Bg$, see Remark \ref{general case}. Then the existence, far field behavior, and uniqueness of the solutions to the problem \eqref{NS}, \eqref{far field}, and \eqref{BC} in these channels can be proved in a similar way.
\end{remark}

\begin{remark}
When the paper has been finished, we got to know that a similar result has been obtained in \cite{LPY} independently. Although there are some overlaps between the results in \cite{LPY} and that in \cite{SWX2022slip} and this paper, the analysis  is different  in many aspects.
\end{remark}

%The main difference comes from the boundary condition. Compared to \cite{Amick, LS}, we meet some new technical difficulties.

%\begin{remark}  When the cross section is not uniformly bounded, it is not easy to get uniqueness of solutions even for that with small fluxes.  Let's consider another solution $\widetilde{\Bu}$ and the equation for the difference $\Bw:=\Bu-\widetilde{\Bu}$. By the embedding inequality in Lemma \ref{lemmaA2}, one has
%\begin{equation*}
%\left|\int_{\Omega_t} \Bw\cdot \nabla\Bu\cdot \Bw\,dx\right|\leq \| \Bw\|_{L^4(\Omega_t)}^2 \|\nabla \Bu\|_{L^2(\Omega_t)}^2 \leq CM_4^2 \|\nabla\Bu\|_{L^2(\Omega_t)}\|\nabla \Bw\|_{L^2(\Omega_t)}^2.
%\end{equation*}
%Note that $M_4^2 \|\nabla\Bu\|_{L^2(\Omega_t)}^2$ is anything but small even for small flux, since $M_4$  becomes unbounded when $t\to \infty$ and there is no decay on $\|\nabla\Bu\|_{L^2(\Omega_t)}$ with respect to $t$. Hence the proof of uniqueness is not easy.
%\end{remark}
The rest of the paper is organized as follows. In Section 2, we give some important lemmas which are used here and there in the paper. Section 3 devotes to the construction of the flux carrier. In Section 4,  the existence of solutions to the problem \eqref{NS}, \eqref{far field}, and \eqref{BC} is proved by Leray-Schauder fixed point theorem. The exponential convergence rate of the $H^1$ norm of the solutions is also given in Section 4. In Section 5, we show that the solutions obtained in Section 4 is unique in $\mathcal{S}$ provided that the flux is suitably small.

\section{Preliminaries}\label{secpre}
In this section, we collect some elementary but important lemmas. We first give the Poincar\'e type inequality and embedding inequality in channels, whose proof could be found in \cite{SWX2022slip}.
\begin{lemma}\label{lemmaA1}
	For any $\Bv \in H^1_*(\Omega_{a,b})$ satisfying $\Bv \cdot \Bn =0$ on $\partial\Omega_{a,b}\cap \partial\Omega$, one has
	\begin{equation}
	\left\|\Bv\right\|_{L^2(\Omega_{a,b})}\leq M_1(\Omega_{a,b}) \left\|\nabla\Bv\right\|_{L^2(\Omega_{a,b})},
	\end{equation}
	 where
	 \begin{equation}\label{defM1}
	 M_1(\Omega_{a,b})=C\|f\|_{L^\infty(a, b)}
	  \cdot \left(1+\|f_2'\|_{L^\infty(a,b)}\right).
	 \end{equation}
	\end{lemma}
\begin{lemma}\label{lemmaA2}
	Assume that $f(x_1)=f_2(x_1)-f_1(x_1)\ge d_{a,b}>0$ for any $x_1\in (a,b)$. Then for any $\Bv \in H_*^1(\Omega_{a,b})$ satisfying  $\Bv \cdot \Bn = 0$ on $\partial\Omega_{a,b}\cap \partial\Omega$, one has
	\begin{equation*}
	\|\Bv\|_{L^4(\Omega_{a,b})}\leq M_4(\Omega_{a,b}) \| \nabla \Bv\|_{L^2(\Omega_{a,b})},
	\end{equation*}
	where
	\begin{equation}\label{defM4}
	M_4(\Omega_{a,b})=C(1+\|(f_1',f_2')\|_{L^\infty(a,b)}^2) \left(\frac{M_1}{b-a}+1\right)^{\frac12}(|\Omega_{a,b}|+(b-a)d_{a,b})^{\frac14} \left(1+\frac{M_1}{d_{a,b}}\right)
	\end{equation}
	with a universal constant $C$ and $M_1=M_1(\Omega_{a,b})$ defined in \eqref{defM1}.
\end{lemma}

Then we give the Korn inequality in the channel $\Omega$.
\begin{lemma}\label{lemmaA3}
Assume that $T>L+1$. There exists a constant $\mfc>0$ such that for any $\Bv \in \mcH_\sigma(\Omega_T)$,  it holds that
\begin{equation}\label{A3-0}
\mfc \|\nabla \Bv\|_{L^2(\Omega_T )}^2\leq 2\|\BD(\Bv)\|_{L^2(\Omega_T)}^2,
\end{equation}
where $\mfc$ is a constant independent of $T$.
\end{lemma}

\begin{proof}
Without loss of generality, we assume that $\Bv \in \mathcal{C}_\sigma(\Omega_T) $ satisfying
\be \nonumber
\int_{f_1(x_1)}^{f_2(x_1)} v_1(x_1, x_2) \, dx_2 = 0 \ \ \ \text{ for any } |x_1| < T.
\ee
 According to the formula
\begin{equation}\label{A3-1}
\Delta \Bv=2\div \BD(\Bv),
\end{equation}
integration by parts yields
\begin{equation}\label{A3-2}
\begin{aligned}
	&\int_{\Omega_T }|\nabla\Bv|^2\,dx-\int_{\partial\Omega_T \cap \partial \Omega }\Bn\cdot \nabla \Bv\cdot \Bv\,ds\\
	=&\int_{\Omega_T  }-\Delta \Bv\cdot\Bv\,dx=\int_{\Omega_T  }-2{\rm div}\BD(\Bv)\cdot\Bv\,dx\\
	=&\int_{\Omega_T  }2|\BD(\Bv)|^2\,dx -\int_{\partial\Omega_T \cap \partial  \Omega} 2\Bn\cdot\BD(\Bv)\cdot \Bv\,ds.
\end{aligned}
\end{equation}
Therefore, one has
\begin{equation*}
\int_{\Omega  }|\nabla\Bv|^2\,dx=\int_{\Omega  }2|\BD(\Bv)|^2\,dx -  \int_{\partial\Omega }2 \Bn \cdot \BD(\Bv)  \cdot \Bv  -  \Bn\cdot \nabla  \Bv\cdot \Bv\,ds.
\end{equation*}
Note that
\begin{equation*}
 \Bn\cdot \nabla \Bv\cdot \Bv=2\Bn\cdot \BD(\Bv)\cdot \Bv-\Bn\cdot\nabla\Bv\cdot\Bv.
\end{equation*}
The boundary condition $\Bv\cdot \Bn= 0$ also implies that  $\partial_{\tau }(\Bv\cdot \Bn)=0$ on the boundary $\partial\Omega$.  Hence one has
\begin{equation}\label{A3-4}
\begin{aligned}
	\Bn\cdot\nabla\Bv\cdot\Bv=&(\Bv\cdot\Bt)\partial_\Bt \Bv\cdot\Bn+(\Bv\cdot\Bn)\partial_\Bn \Bv\cdot\Bn\\
	=&(\Bv\cdot  \Bt) [\partial_\tau (\Bv \cdot \Bn)-\Bv \cdot \partial_\tau\Bn]\\
	=&-(\Bv\cdot  \Bt)(\Bv \cdot \partial_\tau\Bn) \ \ \ \ \ \ \ \ \ \ \ \ \ \ \  \mbox{on}\ \partial \Omega  .
\end{aligned}
\end{equation}
Since  $\partial_\Bt \Bn=0$ on $\partial\Omega\setminus \partial\Omega_{L+1}$, it holds that
\begin{equation}\label{A3-5}
	\begin{aligned}
\int_{\Omega_T }|\nabla\Bv|^2\,dx=& \int_{\Omega_T }2|\BD(\Bv)|^2\,dx -  \int_{\partial\Omega_T \cap \partial \Omega }(\Bv\cdot  \Bt)(\Bv \cdot \partial_\tau\Bn)\,ds\\
\leq & 2\|\BD(\Bv)\|_{L^2(\Omega_T )}^2+\int_{\partial\Omega_T \cap \partial \Omega}|\Bv|^2|\partial_\Bt\Bn|\,ds\\
\leq & 2\|\BD(\Bv)\|_{L^2(\Omega_T)}^2+C_4\|\Bv\|^2_{L^2(\partial\Omega_{L+1}\cap \partial\Omega)},
	\end{aligned}
\end{equation}
where
\begin{equation}\label{defC4}
C_4=\|\partial_\Bt\Bn\|_{L^\infty(\partial\Omega)}.
\end{equation}

Next, we claim that there exists a constant $C_5$ such that
\begin{equation}\label{A3-6}
	C_4\|\Bv\|^2_{L^2(\partial\Omega_{L+1}\cap \partial\Omega)}\leq \frac{1}{2}\|\nabla \Bv\|_{L^2(\Omega_{L+1})}^2+C_5\|\BD(\Bv)\|_{L^2(\Omega_{L+1})}^2.
\end{equation}
Otherwise, there exists a sequence $\{\Bv^m \}\subset \mcH_\sigma(\Omega_T)$ satisfying
\begin{equation*}
	C_4\|\Bv^m\|^2_{L^2(\partial\Omega_{L+1}\cap \partial\Omega)}> \frac{1}{2}\|\nabla \Bv^m\|_{L^2(\Omega_{L+1})}^2+m\|\BD(\Bv^m)\|_{L^2(\Omega_{L+1})}^2.
\end{equation*}
Define
\begin{equation*}
\Bu^m:=\frac{\Bv^m}{\|\Bv^m\|_{L^2(\partial\Omega_{L+1}\cap \partial\Omega)}}.
\end{equation*}
One has
\begin{equation*}
\|\Bu^m\|_{L^2(\partial\Omega_{L+1}\cap \partial\Omega)}=1,\ \ \|\nabla \Bu^m\|_{L^2(\Omega_{L+1})}^2<2C_4 \ \ \text{ and } \ \ \|\BD(\Bu^m)\|_{L^2(\Omega_{L+1})}^2\leq \frac{C_4}{m}.
\end{equation*}
It follows from Lemma \ref{lemmaA1} that $\{\Bu^m \}$ is also bounded in $H^1(\Omega_{L+1})$. Hence one can choose a subsequence stilled labelled by $\{\Bu^m\}$, which converges weakly in $H^1(\Omega_{L+1})$ and strongly in $L^2(\partial\Omega_{L+1}\cap \partial\Omega)$ to a vector field $\Bu^*\in H^1(\Omega_{L+1})$. Clearly, one has
\begin{equation}\label{A3-7}
\|\Bu^*\|_{L^2(\partial\Omega_{L+1}\cap \partial\Omega)}=1, \ \  \ \|\BD(\Bu^*)\|_{L^2(\Omega_{L+1} )} =0, \ \ \ \ \int_{f_1(x_1)}^{f_2(x_1)} u^*_1 \, dx_2 =0.
\end{equation}
%and
%\begin{equation*}
% \|\BD(\Bu^m)\|_{L^2(\Omega_{L+1})}\leq \frac{C_1}{m} .
%\end{equation*}
% This, together with the weak convergence of $\Bu^m$, implies
%\begin{equation*}
%\begin{aligned}
%	&\|\BD(\Bu^*)\|_{L^2(\Omega_{L+1})}^2=\int_{\Omega_{L+1}}\BD(\Bu^*):\BD(\Bu^*)\,dx\\
%	=&\lim_{m\to \infty}\int_{\Omega_{L+1}}\BD(\Bu^m):\BD(\Bu^*)\,dx\\
%	\leq& \|\BD(\Bu^*)\|_{L^2(\Omega_{L+1})} \limsup_{m\to \infty }\|\BD(\Bu^m)\|_{L^2(\Omega_{L+1})}.
%\end{aligned}
%\end{equation*}
%Hence
%\begin{equation*}
%	\|\BD(\Bu^*)\|_{L^2(\Omega_{L+1})}\leq \limsup_{m\to \infty }\|\BD(\Bu^m)\|_{L^2(\Omega_{L+1})}=0.
%\end{equation*}
In particular, one has
\begin{equation*}
\partial_1u_1^*=\partial_2u_2^*=0\ \ \ \text{ and }\ \ \ \partial_1u_2^*+\partial_2u_1^*=0.
 \end{equation*}
Therefore, $\Bu^*$ takes the form
\begin{equation*}
	u_1^*=ax_2+b_1,\ u_2^*=-ax_1+b_2\ \ \ \text{ for some }a\in \mathbb{R}.
\end{equation*}
On the other hand, on the  boundary $\partial \Omega_{L, L+1}\cap \partial \Omega =\{(x_1,x_2):x_1 \in (L,L+1),~x_2=\pm 1\}$,
one has $\Bu^*\cdot \Bn=u_2^*=0$ so that $a=b_1 = 0 $. This
contradicts with the first property in \eqref{A3-7}.  Finally, one combines \eqref{A3-5}-\eqref{A3-6} to conclude \eqref{A3-0} with
\begin{equation}\label{defc}
\mfc=\frac{1}{2+C_5}.
\end{equation}
 This finishes the proof of the lemma.
\end{proof}
\begin{remark}
	It is noteworthy that the constant $\mfc$ depends only on the subdomain $\Omega_{L+1}$.
\end{remark}

The following lemma on the solvability of the divergence equation is used to give the estimates involving pressure. For the proof, one may refer to \cite[Theorem  \uppercase\expandafter{\romannumeral3}.3.1 ]{Ga} and \cite{Bo}.
\begin{lemma}\label{lemmaA5}
Let $D \subset \R^n$ be a locally Lipschitz domain. Then there exists a constant $M_5$ such that for any $w\in L_0^2(D)$, the problem
\begin{equation}\label{A5-1}
\left\{\begin{aligned}
{\rm div}~\Ba=w ~~~~~~~~~~&\text{ in }D,\\
\Ba=0 ~~~~~~~~~~~~~~~~~&\text{ on }\partial D
\end{aligned}\right.
\end{equation}
has a solution $\Ba \in H^1_0(D)$ satisfying
\[
\|\nabla\Ba\|_{L^2(D)}\leq M_5(D)\|w\|_{L^2(D)}.
\]  In particular, if the domain $D$ is star-like with respect to some open ball $B$ with  $\overline{B}\subset D$, then the constant $M_5(D)$ admits the following estimate
\begin{equation*}
M_5(D)\leq C \left(\frac{R_0}{R}\right)^n\left(1+\frac{R_0}{R}\right),
\end{equation*}
where $R_0$ is the diameter of the domain $D$ and $R$ is the radius of the ball $B$.
\end{lemma}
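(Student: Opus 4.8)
The plan is to prove this by the explicit Bogovskii construction: first on domains that are star-shaped with respect to a ball, where one writes down a solution operator directly and bounds its gradient by Calderón--Zygmund theory, and then on a general bounded locally Lipschitz $D$ by a partition-of-unity gluing argument. The stated dependence of $M_5$ on $R/R_0$ falls out of a scaling analysis of the star-shaped case. I treat $D$ as bounded, which is forced by the finiteness of the constant in terms of the diameter $R$.

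First I would handle the model case in which $D$ is star-shaped with respect to a ball $B=B(x_0,R_0)$ with $\overline{B}\subset D$. Fix $\omega\in C_c^\infty(B)$ with $\int\omega=1$, and for $f\in C_c^\infty(D)$ with $\int_D f=0$ define
\[
\Ba(x)=\int_D f(y)\,(x-y)\int_1^\infty \omega\big(y+s(x-y)\big)\,s^{n-1}\,ds\,dy.
\]
Differentiating under the integral sign and changing variables along the ray emanating from $y$, a direct computation shows $\div\,\Ba=f-\omega\int_D f=f$, the last equality using the zero-mean hypothesis; moreover the star-shapedness keeps every integration ray inside $D$, so $\Ba\in C_c^\infty(D)$. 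Extending $f\mapsto\Ba$ by density then gives $\Ba\in H^1_0(D)$.

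The heart of the argument is the a priori bound $\|\na\Ba\|_{L^2(D)}\le M_5\|f\|_{L^2(D)}$. Differentiating the kernel produces two terms: a principal-value integral whose kernel is homogeneous of degree $-n$ in $x-y$ and has mean zero over spheres, and a remainder with a weakly singular, locally integrable kernel of order $-(n-1)$. The first is a Calderón--Zygmund operator, hence bounded on $L^2$, while the second is controlled by Schur/Young estimates. To extract the explicit constant, I would rescale $D$ to unit diameter (so $R\mapsto 1$, $R_0\mapsto R_0/R$), choose $\omega$ adapted to the ball of radius $\sim R_0/R$ with $\int\omega=1$, and track how each factor $\|\omega\|$, $\|\na\omega\|$ and the kernel scale; collecting the powers yields $M_5\le C(R/R_0)^n\big(1+R/R_0\big)$. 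For a general bounded locally Lipschitz $D$, I would use the finite cover $D=\bigcup_i D_i$ by subdomains $D_i$ each star-shaped with respect to a ball, take a subordinate partition of unity $\{\psi_i\}$, and write $f=\sum_i\psi_i f$. Since the pieces $\psi_i f$ need not have zero mean, I would correct them by a telescoping flux-transfer through the overlaps $D_i\cap D_{i+1}$, redistributing the masses to obtain $f=\sum_i f_i$ with $\supp f_i\subset D_i$, $\int f_i=0$ and $\|f_i\|_{L^2}\le C\|f\|_{L^2}$, using connectedness of $D$. Solving $\div\,\Ba_i=f_i$ on each $D_i$ by the model case and setting $\Ba=\sum_i\Ba_i$ produces a solution in $H^1_0(D)$ with the desired estimate.

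The main obstacle I anticipate is the singular-integral estimate in the model case together with the bookkeeping required to produce the explicit constant $C(R/R_0)^n(1+R/R_0)$ under scaling; the zero-mean redistribution in the gluing step is also delicate, since one must keep the $L^2$ norms of the corrected pieces $f_i$ bounded uniformly by $\|f\|_{L^2}$. These are precisely the points carried out in \cite[Theorem III.3.1]{Ga} and \cite{Bo}, whose construction I would follow.
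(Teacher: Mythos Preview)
Your proposal is correct and follows precisely the Bogovskii construction from \cite{Bo} and \cite[Theorem~III.3.1]{Ga}, which is exactly what the paper does: the paper gives no argument of its own for this lemma and simply refers the reader to those two sources. Your sketch of the explicit integral operator on star-shaped domains, the Calder\'on--Zygmund bound, the scaling analysis for the constant, and the partition-of-unity gluing for general Lipschitz domains is the standard route carried out in those references.
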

\begin{remark}
In particular, for $D=\Omega_{t-1,t}$ or $\Omega_{-t, -t+1}$, $t>L+1$, the constant $M_5(D)$ is independent of $t$ since $D$ is a star-like domain with respect to a ball with radius $\frac14$.
\end{remark}

We next recall a differential inequality (cf.\cite{LS}), which plays the key role in establishing the uniqueness  of the solutions.
\begin{lemma}\label{lemmaA4}~
	 Let $z(t)$ be a nondecreasing and nonnegative function, which is not identically equal to zero. Assume that $\Psi(\tau)$ is a monotonically increasing function, which equals to zero at $\tau =0$ and tends to $\infty$ as $\tau \rightarrow \infty$. Suppose that there exist $m>1, t_0\ge 0, \tau_1\ge 0,c_0>0$ such that
	\begin{equation*}
	  z(t)\leq \Psi(z'(t))~~~~\text{ for any } t\ge t_0\text{ and }
	\Psi(\tau)\leq c_0\tau^m \text{ for any }\tau\ge \tau_1,
	\end{equation*}
	then it holds that
	\begin{equation*}
	  \liminf_{t\to \infty} t^\frac{-m}{m-1}z(t)>0.
	\end{equation*}
\end{lemma}
With the aid of the differential inequality for the Dirichlet norm on approximate domain $\Omega_t$, one has that either it is trivial or it grows faster that $t^{\frac{m}{m-1}}$.

\section{Flux carrier}\label{secflux}
In this section, we construct the so called flux carrier $\Bg=(g_1,g_2)$, which is a smooth vector field satisfying
\begin{equation}\label{2-0}
\left\{\begin{aligned}
	&\operatorname{div} \Bg=0  &&\text{ in }\Omega,\\
	&\Bg\cdot \Bn=0,~ \Bn\cdot \BD(\Bg)\cdot \tau=0  &&\text{ on }\partial\Omega,\\
	&\Bg\to \BU=\frac\Phi2  \Be_1  &&\text{ as }|x_1| \to \infty.
\end{aligned}\right.
\end{equation}

Inspired by \cite{A1,Mu1}, we introduce two smooth functions $\mu(t;\varepsilon):[0,\infty)\to [0,1]$ and $\pi(s;\mfD):\R \to [0,1]$ satisfying
\begin{equation}\label{2-1}
\mu(t;\varepsilon)=\left\{
\begin{aligned}
&1,\,\,\,\,\,\, \text{ if }t\text{ near }0,\\
&0,\,\,\,\,\,\, \text{ if }t\ge \varepsilon,
\end{aligned}
\right.
\ \ \ \ \ \
	\pi(s;\mfD)=\left\{
	\begin{aligned}
	&0,\,\,\,\,\,\, \text{ if }|s|\leq \frac{5\mfD}{4},\\
	&1,\,\,\,\,\,\, \text{ if }|s|\ge \frac{7\mfD}{4}
	\end{aligned}
	\right.
	\end{equation}
and
\begin{equation}\label{2-2-1}
	0\leq -\mu'(t;\varepsilon)\leq \frac{\varepsilon}{t}, \ \ \ \ 	0 \leq \pi'(s;\mfD)\leq \frac{4}{\mfD},\ \ \ \ 	0 \leq \pi''(s;\mfD)\leq \frac{16}{\mfD^2},
\end{equation}
where $\varepsilon$ and $\mfD>L$ are two parameters to be determined. One can refer to \cite[Lemma 2.6]{A1} for the detailed construction of $\mu(t;\varepsilon)$. Define $\Bg=(g_1,g_2)$ as
\begin{equation}\label{2-12}
	g_1(x_1,x_2)=\partial_{x_2}G(x_1,x_2;\varepsilon)+\left(\frac\Phi2-\partial_{x_2}G(x_1,x_2;\varepsilon)\right)\pi(x_1;\mfD)
\end{equation}
and
\begin{equation}\label{2-11}
g_2(x_1,x_2)=\left\{\begin{aligned}&-\partial_{x_1}G(x_1,x_2;\varepsilon)  &&\text{ if }|x_1|< \mfD,\\
	&\pi'(x_1;\mfD)\left(G(x_1,x_2;\varepsilon)-\frac\Phi2 (x_2+1)\right) &&\text{ if } |x_1|\ge \mfD,
\end{aligned}\right.
\end{equation}
where
\begin{equation}\label{defG}
	G(x_1,x_2;\varepsilon)=\Phi \mu(f_2(x_1)-x_2;\varepsilon).
\end{equation}

Denote
\begin{equation}\label{defSigma}
	\Sigma(x_1)=\{(x_1,x_2): f_1(x_1)<x_2<f_2(x_1)\}.
\end{equation}
In order to show that $\Bg \in C^\infty(\overline{\Omega})$, it's sufficient to verify the smoothness of $g_2$ near $\Sigma(\pm \mfD)$ since both $\pi$ and $\mu$ are smooth.  Actually, it holds that $f_2(x_1)=1$ for any $|x_1|>L$ and then the function
\begin{equation*}
	G(x_1,x_2;\varepsilon)=\mu(f_2(x_1)-x_2;\varepsilon,\mfD)=\Phi\mu(1-x_2;\varepsilon)
\end{equation*}
depends only on $x_2$ in the subdomain $\Omega\setminus\Omega_{L}$. Therefore, for any $\Bx \in \Omega$ with $L\leq |x_1|<\mfD$, one has
\begin{equation*}
	g_2(x_1,x_2)=-\partial_{x_1}G(x_1,x_2;\varepsilon)=-\partial_{x_1}\mu(1-x_2;\varepsilon,\mfD)=0.
\end{equation*}
On the other hand, \eqref{2-1}, together with \eqref{2-11}, implies that
$g_2(x_1,x_2)\equiv 0$ for any $\Bx \in \Omega$ with $\mfD\leq |x_1|<\frac{5\mfD}{4}$. Hence $\Bg\in C^\infty(\overline{\Omega})$.

Next, note that
\begin{equation*}
G(x_1,x_2;\varepsilon)=\left\{
\begin{aligned}
&\Phi,&&\text{ if }x_2\text{ near }f_2(x_1),\\
&0,&&\text{ if }x_2\leq f_2(x_1)-\varepsilon,
\end{aligned}
\right.
\end{equation*}
and
\begin{equation*}
	\Bg=(\partial_{x_2}G,-\partial_{x_1}G) \text{ in }\Omega_\mfD.
\end{equation*}
Hence $\Bg$ is a solenoidal vector field with flux $\Phi$ in $\Omega_\mfD$. In particular, $\Bg$ vanishes near the boundary $\partial \Omega\cap \partial\Omega_{\mfD}$.

In the subdomain $\Omega\setminus\Omega_\mfD$, since $f_2(x_1)=1$ and $f_1(x_1)=-1$ for any $|x_1|\ge L$, one has $\partial_{x_1} G=0$. It follows from straightforward computations that one has
\begin{equation*}
\begin{aligned}
		\operatorname{div}\Bg = &\partial_{x_1}g_1+\partial_{x_2}g_2\\
		=&~\left(\frac\Phi2-\partial_{x_2}G(x_1,x_2;\varepsilon)\right)\pi'(x_1;\mfD)+\pi'(x_1;\mfD)\left(\partial_{x_2}G(x_1,x_2;\varepsilon)-\frac\Phi2 \right)\\
		=&~0
\end{aligned}
\end{equation*}
and
\begin{equation*}
\begin{aligned}
\int_{\Sigma(x_1)} g_1(x_1,x_2)\,dx_2= &~\int_{-1}^1 \partial_{x_2}G(x_1,x_2;\varepsilon)+\left(\frac{\Phi}{2}-\partial_{x_2}G(x_1,x_2;\varepsilon)\right)\pi(x_1;\mfD)\,dx_2\\
=&~ \Phi+\pi(x_1;\mfD)\int_{-1}^1\frac{\Phi}{2}-\partial_{x_2}G(x_1,x_2;\varepsilon)\,dx_2\\
=&~\Phi.
\end{aligned}
\end{equation*}
Moreover, at the upper boundary
\begin{equation*}
	S_{2;\mfD} = \{\Bx \in \partial\Omega: x_2=1, ~|x_1|>\mfD\},
\end{equation*}
one has $\Bt=(1,0)$, $\Bn=(0,1)$. Note also that $G(x_1,x_2;\varepsilon)|_{x_2=f_2(x_1)}=\Phi$ and  $\partial_{x_2} G(x_1,x_2;\varepsilon)$ vanishes near the boundary $\partial\Omega$. Hence it holds that
\begin{equation*}
	\Bg \cdot \Bn=g_2 (x_1,x_2)|_{x_2=f_2(x_1)}=\pi'(x_1;\mfD) (G(x_1,1;\varepsilon)-\Phi)=0
	\end{equation*}
	and
\begin{equation*}
\begin{aligned}
\Bn\cdot \BD(\Bg )\cdot \Bt=&\frac12(\partial_{x_2}g_1 +\partial_{x_1}g_2 )(x_1,x_2)|_{x_2=f_2(x_1)}\\
=&~\frac12 \left(\partial_{x_2}^2G(x_1,1;\varepsilon)-\partial_{x_2}^2G(x_1,1;\varepsilon)\pi(x_1;\mfD)+\pi''(x_1;\mfD) (G(x_1,1;\varepsilon)-\Phi)\right)\\
=&~0.
\end{aligned}
\end{equation*}
Similarly, at the lower boundary
\begin{equation*}
	S_{1;\mfD} = \{\Bx \in \partial\Omega: x_2=-1, ~ |x_1|>\mfD\},
\end{equation*}
one has also $\Bt=(1,0)$, $\Bn=(0,-1)$ and $G(x_1,x_2;\varepsilon)|_{x_2=f_1(x_1)}=0$. Therefore, one has
\begin{equation*}
\Bg \cdot \Bn=-g_2 (x_1,x_2;\varepsilon,\mfD)|_{x_2=f_1(x_1)}=\pi'(x_1;\mfD) G(x_1,-1;\varepsilon)=0
\end{equation*}
and
\begin{equation*}
\begin{aligned}
\Bn\cdot \BD(\Bg )\cdot \Bt=&-\frac12(\partial_{x_2}g_1 +\partial_{x_1}g_2 )(x_1,x_2)|_{x_2=f_1(x_1)}\\
=&~-\frac12 \left(\partial_{x_2}^2G(x_1,-1;\varepsilon)-\partial_{x_2}^2G(x_1,-1;\varepsilon)\pi(x_1;\mfD)+\pi''(x_1;\mfD)G(x_1,-1;\varepsilon)\right)\\
=&~0.
\end{aligned}
\end{equation*}
Finally, noting $\pi(x_1;\mfD)= 1$ for any $|x_1|\ge \frac{7\mfD}{4}$, one has
\begin{equation*}
	\Bg\equiv \left(\frac\Phi2,0\right)  \ \ \ \text{ in }\Omega\setminus \Omega_{\frac{7\mfD}{4}}.
\end{equation*}
Hence $\Bg$ satisfies \eqref{2-0} in $\Omega$.

\begin{remark}\label{general case}
	For a more general channel domain $\Omega$ with \eqref{1-6}-\eqref{1-7}, one could also construct the corresponding flux carrier $\Bg$ via some modifications. Assume $\beta>0$. For any  $\Bx\in \Omega_{-L,\infty}$, we define $\Bg=(g_1,g_2)$ as the same form of \eqref{2-12} and \eqref{2-11}, i.e.,
	\begin{equation*}
		g_1(x_1,x_2)=\partial_{x_2}G(x_1,x_2;\varepsilon)+\left(\frac\Phi2-\partial_{x_2}G(x_1,x_2;\varepsilon)\right)\pi(x_1;\mfD) \text{ if }x_1\ge -L
	\end{equation*}
	and
	\begin{equation*}
		g_2(x_1,x_2)=\left\{\begin{aligned}&-\partial_{x_1}G(x_1,x_2;\varepsilon),  &&\text{ if }-L\leq x_1< \mfD,\\
			&\pi'(x_1;\mfD)\left(G(x_1,x_2;\varepsilon)-\frac\Phi2 (x_2+1)\right), &&\text{ if } x_1\ge \mfD.
		\end{aligned}\right.
	\end{equation*}
	
	On the other hand, let
	\begin{equation}\label{definerotation}
	\tilde{x}_1 =x_1 \cos\theta +x_2\sin\theta\quad \text{and}\quad 	\tilde{x}_2 =-x_1 \sin\theta +x_2\cos\theta,
	\end{equation}
	with  $\theta=\arctan \beta$, which transforms the outlet $\Omega_{-\infty,-L}$ into a flat outlet $\tilde{\Omega}_{-\infty,-L}$ in the new coordinate $(\tilde{x}_1, \tilde{x}_2)$. More precisely,
	\begin{equation*}
		\tilde{\Omega}_{-\infty,-L}=\left\{(\tilde{x}_1,\tilde{x}_2):~\tilde{x}_2\in (\gamma_1\cos\theta,\gamma_2\cos\theta),~\tilde{x}_1<\tilde{x}_2 \tan\theta -\frac{L}{\cos\theta}\right\}.
	\end{equation*}
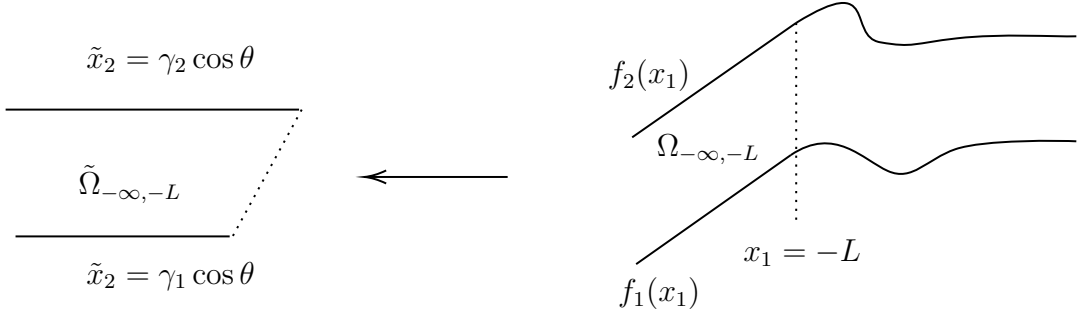
\begin{figure}[h]
	\centering

	\tikzset{every picture/.style={line width=0.75pt}} %set default line width to 0.75pt
	
	\begin{tikzpicture}[x=0.75pt,y=0.75pt,yscale=-1,xscale=1]
		%uncomment if require: \path (0,254); %set diagram left start at 0, and has height of 254
		
		%Curve Lines [id:da15178116908693506]
		\draw    (483.8,118.95) .. controls (505.99,103.98) and (520.32,121.45) .. (532.8,127) .. controls (545.28,132.55) and (552.24,120.99) .. (566.8,116) .. controls (581.36,111.01) and (619.91,111.62) .. (627.8,112) ;
		%Shape: Boxed Line [id:dp5639946041231112]
		\draw    (403.8,110) -- (421.1,97.92) -- (424.56,95.5) -- (486.52,52.22) ;
		%Curve Lines [id:da44320464830574147]
		\draw    (486.52,52.22) .. controls (528.8,26) and (511.8,59) .. (528.8,62) .. controls (545.8,65) and (545.49,62.5) .. (567.8,60) .. controls (590.11,57.5) and (616.47,59.31) .. (627.8,59) ;
		%Shape: Boxed Line [id:dp10918034132022503]
		\draw  [dash pattern={on 0.84pt off 2.51pt}]  (486.52,52.22) -- (486.52,88.61) -- (486.52,153.91) ;
		%Straight Lines [id:da6481764578501483]
		\draw    (87.8,96) -- (235.8,96) ;
		%Straight Lines [id:da5622101669598336]
		\draw    (92.8,160) -- (200.8,160) ;
		%Shape: Boxed Line [id:dp12223192432172558]
		\draw  [dash pattern={on 0.84pt off 2.51pt}]  (237.14,96) -- (224.67,118.9) -- (202.29,160) ;
		%Straight Lines [id:da6103320571300845]
		\draw    (341,130) -- (270.8,130) ;
		\draw [shift={(268.8,130)}, rotate = 360] [color={rgb, 255:red, 0; green, 0; blue, 0 }  ][line width=0.75]    (10.93,-3.29) .. controls (6.95,-1.4) and (3.31,-0.3) .. (0,0) .. controls (3.31,0.3) and (6.95,1.4) .. (10.93,3.29)   ;
		%Shape: Boxed Line [id:dp7959201562242153]
		\draw    (405.8,174) -- (422.11,162.49) -- (425.38,160.18) -- (483.8,118.95) ;
		
		% Text Node
		\draw (390.09,69.19) node [anchor=north west][inner sep=0.75pt]    {$f_{2}( x_{1})$};
		% Text Node
		\draw (395.26,179.11) node [anchor=north west][inner sep=0.75pt]    {$f_{1}( x_{1})$};
		% Text Node
		\draw (458.37,160.01) node [anchor=north west][inner sep=0.75pt]    {$x_{1} =-L$};
		% Text Node
		\draw (291,104.4) node [anchor=north west][inner sep=0.75pt]    { };
		% Text Node
		\draw (123,122.4) node [anchor=north west][inner sep=0.75pt]    {$\tilde{\Omega} _{-\infty ,-L}$};
		% Text Node
		\draw (414.8,106.81) node [anchor=north west][inner sep=0.75pt]    {$\Omega _{-\infty ,-L}$};
		% Text Node
		\draw (127.09,62.19) node [anchor=north west][inner sep=0.75pt]    {$\tilde{x}_2=\gamma_2\cos\theta$};
		% Text Node
		\draw (127.09,171.19) node [anchor=north west][inner sep=0.75pt]    {$\tilde{x}_2=\gamma_1\cos\theta$};

	\end{tikzpicture}

	\caption{Rotation transformation}
\end{figure}

 In the flat outlet $\tilde{\Omega}_{-\infty,-L}$, one could  construct the vector field $\tilde{\Bg}=(\tilde{g}_1,\tilde{g}_2)$  in a way similar to \eqref{2-12}-\eqref{2-11},
	\begin{equation*}
		\tilde{g}_1(\tilde{x}_1,\tilde{x}_2)=\partial_{\tilde{x}_2} \tilde{G}(\tilde{x}_1, \tilde{x}_2)+\left(\frac{\Phi}{(\gamma_2-\gamma_1)\cos\theta}-\partial_{x_2}\tilde{G}(\tilde{x}_1, \tilde{x_2})\right)\pi(\tilde{x}_1-\tilde{L}_1-\mfD;\mfD)\ \ \  \text{ if }\tilde{x}_1<\tilde{L}_2
	\end{equation*}
	and
	\begin{equation*}
		\tilde{g}_2(\tilde{x}_1,\tilde{x}_2)=\left\{\begin{aligned}&-\partial_{\tilde{x}_1} \tilde{G}(\tilde{x}_1, \tilde{x}_2)   &&\text{ if }\tilde{L}_1\leq \tilde{x}_1< \tilde{L}_2,\\
			&\pi'(\tilde{x}_1-\tilde{L}_1-\mfD;\mfD)\left(\tilde{G}(\tilde{x}_1, \tilde{x}_2)-\frac{\Phi(\tilde{x}_2-\gamma_1\cos\theta)}{(\gamma_2-\gamma_1)\cos\theta} \right)  &&\text{ if } \tilde{x}_1<\tilde{L}_1,
		\end{aligned}\right.
	\end{equation*}
	where
	\begin{equation*}
		\tilde{L}_2 =\gamma_2\sin\theta-\frac{L}{\cos\theta},\ \ \ \ \tilde{L}_1=\gamma_1\sin\theta-\frac{L}{\cos\theta},
	\end{equation*}
	and
	\begin{equation*}
		\tilde{G}(\tilde{x}_1,\tilde{x}_2)=\Phi \mu\left(\frac{\gamma_2 \cos\theta-\tilde{x}_2}{\cos\theta};\varepsilon\right).
	\end{equation*}
For $x_1<-L$, define
	\begin{equation*}
	g_1(x_1,x_2)=\tilde{g}_1(\tilde{x}_1, \tilde{x}_2)\cos \theta -\tilde{g}_2(\tilde{x}_1, \tilde{x}_2)\sin\theta,\quad 	g_2(x_1,x_2)=\tilde{g}_1(\tilde{x}_1, \tilde{x}_2)\sin \theta +\tilde{g}_2(\tilde{x}_1, \tilde{x}_2)\cos\theta,
	\end{equation*}
where the relation between $(x_1, x_2)$ and $(\tilde{x}_1, \tilde{x}_2)$ is given in \eqref{definerotation}.
Then the straightforward computations show that $\Bg=(g_1,g_2)$ is smooth near $\Sigma(-L)$ and thus is smooth in $\Omega$. Furthermore, $\Bg$ is divergence free, satisfies the slip boundary condition on the channel boundary, and tends to the associated shear flows with flux $\Phi$ at far fields. Hence $\Bg$ is a flux carrier.
\end{remark}

The following two  lemmas give the crucial properties of the flux carrier $\Bg$, which plays an important role in the energy estimates.
\begin{lemma}\label{lemma1}
The function $G(x_1,x_2;\varepsilon)$ defined  in \eqref{defG} satisfies
\begin{equation*}
|\nabla G(x_1,x_2;\varepsilon)|+|\nabla^2G(x_1,x_2;\varepsilon)|\leq C(\varepsilon)\Phi.
\end{equation*}
Furthermore, for any function $w\in H^1(\Omega_{a,b})$ satisfying $w=0$ on the upper boundary $S_{2;a,b}:=\{\Bx\in \partial \Omega:\ x_2=f_2(x_1), \ a<x_1<b\}$, it holds that
\begin{equation*}
\int_{\Omega_{a,b}} w^2 |\partial_{x_2}G|^2\,dx\leq C \Phi^2\varepsilon^2\int_{\Omega_{a,b}}|\partial_{x_2} w |^2\,dx,
\end{equation*}
where $C(\varepsilon)$ is a constant depending only on $\varepsilon$ and $C$ is a uniform constant independent of $\epsilon$.
\end{lemma}
\begin{proof}
Recall the definition for $G(x_1,x_2;\varepsilon)$ in \eqref{defG}. It follows from direct computations that one has
\begin{equation}\label{2-3}
	\partial_{x_1}G(x_1,x_2;\varepsilon)=\Phi \mu'(f_2(x_1)-x_2;\varepsilon)f_2'(x_1),\ \
	\partial_{x_2}G(x_1,x_2;\varepsilon)=-\Phi \mu'(f_2(x_1)-x_2;\varepsilon).
\end{equation}
Furthermore,
\begin{equation}\label{2-5}
	\partial_{x_1x_2}^2G(x_1,x_2;\varepsilon)=-\Phi \mu''(f_2(x_1)-x_2;\varepsilon)f_2'(x_1),
\end{equation}
\begin{equation}\label{2-6}
	\partial_{x_2}^2G(x_1,x_2;\varepsilon)=\Phi \mu''(f_2(x_1)-x_2;\varepsilon)
\end{equation}
and
\begin{equation}\label{2-7}
	\partial_{x_1}^2G(x_1,x_2;\varepsilon)=\Phi \mu'(f_2(x_1)-x_2;\varepsilon)f_2''(x_1)+\Phi \mu''(f_2(x_1)-x_2;\varepsilon)|f_2'(x_1)|^2.
\end{equation}
Noting $\mu(t;\varepsilon)$ is smooth and $ \operatorname{supp} \mu'\subset [0,\varepsilon]$, one has
\begin{equation*}
	|\mu'(t;\varepsilon)|,~|\mu''(t;\varepsilon)|\leq C(\varepsilon).
\end{equation*}
Moreover, since $f_2(x_1)=1$ for any $|x_1|\ge L$, one has also
\begin{equation*}
	|f_2'(x_1)| ,~|f_2''(x_1)|\leq C.
\end{equation*}
Then it follows that
\begin{equation*}
	|\nabla G(x_1,x_2;\varepsilon)|+|\nabla^2G(x_1,x_2;\varepsilon)|\leq C(\varepsilon)\Phi.
\end{equation*}

Next, one has
\begin{equation*}\begin{aligned}
\int_{\Omega_{a,b}} w^2 |\partial_{x_2}G|^2\,dx=&\int_{\Omega_{a,b}} \Phi^2(\mu'(f_2(x_1)-x_2;\varepsilon))^2w^2\,dx\\
	\leq &C\Phi^2\varepsilon^2\int_a^b\,dx_1\int_{f_1(x_1)}^{f_2(x_1)}\frac{w^2}{(f_2(x_1)-x_2)^2}\,dx_2\\
%=&C\Phi^2\varepsilon^2\int_a^b\,dx_1\int_{1-\varepsilon}^{1}\frac{w^2}{(1-x_2')^2}\frac{1}{f_1(x_1)}\,dx_2'\\
%\leq &C\Phi^2\varepsilon^2\int_a^b\,dx_1\int_{1-\varepsilon}^{1}|\partial_{x_2'}w|^2\frac{1}{f_1(x_1)}\,dx_2'\\
\leq&C\Phi^2\varepsilon^2\int_{\Omega_{a,b}}|\partial_{x_2}w|^2\,dx,
\end{aligned}\end{equation*}
where \eqref{2-2-1} and the Hardy inequality (\cite{HLP}) have been used. This finishes the proof of the lemma.
\end{proof}

\begin{lemma}\label{lemma2}
The flux carrier $\Bg$ satisfies
\begin{equation}\label{2-8}
	\int_{\Omega}|\nabla  \Bg|^2+|\Bg\cdot \nabla \Bg|^2\,dx\leq C(\varepsilon,\mfD)(\Phi^2 + \Phi^4),
\end{equation}
where $C(\varepsilon,\mfD)$ is a constant depending only on $\varepsilon$ and $\mfD$. Moreover, for any $\delta>0$, there exist $\varepsilon$ and $\mfD$ such that for any $\Bv \in \mcH_\sigma(\Omega)$, it holds that
	\begin{equation*}
	\left|\int_{\Omega} \Bv\cdot \nabla \Bg\cdot \Bv\,dx\right|\leq \delta \|\nabla \Bv\|_{L^2}^2.
	\end{equation*}
\end{lemma}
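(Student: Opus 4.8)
The plan is to split the verification into the two claimed bounds, treating the global $L^2$ bound \eqref{2-8} first and then the smallness of the trilinear term. For \eqref{2-8}, I would decompose $\Omega = \Omega_{\mfd} \cup (\Omega \setminus \Omega_{\mfd})$. On the bounded piece $\Omega_{\mfd}$ we have $\Bg = (\partial_{x_2}G, -\partial_{x_1}G)$, so $\nabla \Bg$ is controlled by $|\nabla^2 G| \leq C(\varepsilon)\Phi$ from Lemma \ref{lemma1}, and since $\Omega_{\mfd}$ has finite measure depending on $\mfd$, integrating the square gives a bound $C(\varepsilon,\mfd)\Phi^2$; the quartic term $|\Bg\cdot\nabla\Bg|^2$ is handled the same way using in addition $|\nabla G| \leq C(\varepsilon)\Phi$, so $|\Bg| \leq C(\varepsilon)\Phi$ there. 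On the outlet piece, from the explicit formulas \eqref{2-12}--\eqref{2-11} together with $\pi'$ supported in $\{\tfrac{5\mfd}{4}\leq |x_1|\leq\tfrac{7\mfd}{4}\}$ and the bounds \eqref{2-2-1} on $\pi', \pi''$, one sees $\Bg$ differs from the constant $(\Phi/2,0)$ only on a set of finite measure (the region $L \le |x_1| \le \tfrac{7\mfd}{4}$), where $\nabla\Bg$ involves $\pi', \pi'', \partial_{x_2}G, \partial_{x_2}^2 G$, all bounded by $C(\varepsilon,\mfd)\Phi$; and outside $\Omega_{7\mfd/4}$ the vector field is exactly constant so $\nabla\Bg \equiv 0$ and $\Bg\cdot\nabla\Bg\equiv 0$. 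Collecting the contributions gives \eqref{2-8}.

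For the trilinear estimate, the point is that the "bad" part of $\nabla\Bg$ — the part whose $L^\infty$ norm cannot be made small — is supported away from the boundary on the bounded set $\{L \le |x_1| \le \tfrac{7\mfd}{4}\}$, whereas the part that lives near the whole boundary $\partial\Omega$ is the term $\partial_{x_2}G$, which Lemma \ref{lemma1} shows is small in a weighted sense. Concretely, I would write $\nabla\Bg$ as a sum of two pieces: one piece $\mathcal{G}_1$ built out of $\partial_{x_2}G$ and $\partial_{x_2}^2 G$ (present everywhere, including near the boundary, but "Hardy-controllable"), and a remainder $\mathcal{G}_2$ involving $\pi', \pi''$ and $\partial_{x_1}$-derivatives, which is supported in the bounded region $\{L \le |x_1| \le \tfrac{7\mfd}{4}\}$ at a positive distance from the part of $\partial\Omega$ where the no-penetration constraint binds the flows. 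Wait — more carefully, $\mathcal{G}_2$ is supported in $\Omega_{7\mfd/4}$, a bounded set, but it need not be away from $\partial\Omega$; so for that piece I instead use that it is bounded, $|\mathcal{G}_2|\le C(\varepsilon,\mfd)\Phi$, and supported in a \emph{fixed bounded} set, combined with a compactness/interpolation argument: for $\Bv \in H_\sigma(\Omega)$, by Lemma \ref{lemmaA1} (applied on $\Omega_{7\mfd/4}$) and Rellich, $\|\Bv\|_{L^2(\Omega_{7\mfd/4})}$ is controlled, but not by $\delta\|\nabla\Bv\|_{L^2}$ directly. Instead I split once more: $|\Bv|^2 \le \eta^{-1}$-weighting near the bounded region is not available, so the cleanest route is to bound $|\int \Bv\cdot\mathcal G_2\cdot\Bv| \le C(\varepsilon,\mfd)\Phi\|\Bv\|_{L^4(\Omega_{7\mfd/4})}^2 \le C(\varepsilon,\mfd)\Phi\cdot M_4^2\|\nabla\Bv\|_{L^2}^2$ using Lemma \ref{lemmaA2} — but this is not small.

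The resolution, which I expect to be the real content, is to \emph{first fix $\varepsilon$ small} and observe that the genuinely non-small part of $\nabla\Bg$ comes only from $\partial_{x_2}^2 G$ near the boundary and from the $\pi$-cutoff region; for the former, integrating by parts moves a derivative off $\Bg$: $\int \Bv\cdot\nabla\Bg\cdot\Bv$, after using $\operatorname{div}\Bv=0$ and the boundary condition $\Bv\cdot\Bn=0$, can be rewritten with $\Bg$ (not $\nabla\Bg$) paired against $\Bv\otimes\nabla\Bv$, and then the term with $\partial_{x_2}G$ is estimated by Cauchy–Schwarz and the weighted Lemma \ref{lemma1}: $|\int \Bv\cdot(\partial_{x_2}G)\,\partial_{x_2}\Bv| \le \big(\int |\Bv|^2|\partial_{x_2}G|^2\big)^{1/2}\|\nabla\Bv\|_{L^2} \le C\Phi\varepsilon\|\nabla\Bv\|_{L^2}^2$, which is $\le \delta\|\nabla\Bv\|_{L^2}^2$ once $\varepsilon \le \delta/(C\Phi)$. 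This disposes of all boundary contributions. What remains after this integration by parts is an integral of $\Bg$ times $\Bv\otimes\nabla\Bv$ over the bounded region where $\Bg$ is the flux-transition profile; there $\Bg$ differs from the constant $(\Phi/2,0)$, but the constant part contributes zero (as $\int \Bv\cdot(\text{const})\cdot\nabla\Bv = \tfrac12\int(\text{const})\cdot\nabla|\Bv|^2 = 0$ by $\Bv\cdot\Bn=0$ and decay), so only $(\Bg-(\Phi/2,0))$, supported in $\Omega_{7\mfd/4}\setminus\Omega_{\mfd/2}$ — hmm, this still does not obviously vanish. Here is where I would choose $\mfd$ large: on the transition set the relevant component of $\Bg - (\Phi/2,0)$ is $O(\Phi)$ but the set $\{\tfrac{5\mfd}{4}\le|x_1|\le\tfrac{7\mfd}{4}\}$ has the feature that $\Bv$ is small there in $L^2$, and more usefully one can integrate once more by parts in $x_1$ shifting onto $\pi$, picking up $\|\pi'\|_{L^\infty}\le 4/\mfd \to 0$; thus choosing $\mfd$ large makes this last contribution $\le \delta\|\nabla\Bv\|_{L^2}^2$ as well. \textbf{The main obstacle} is organizing these integrations by parts so that every term is either (i) weighted near the boundary and killed by the Hardy bound of Lemma \ref{lemma1} via small $\varepsilon$, or (ii) carrying a factor $\pi'$ or $\pi''$ and killed by large $\mfd$, with no leftover term that is merely "bounded but not small"; one must be careful that the order of choices is $\varepsilon$ first (depending on $\delta, \Phi$), then $\mfd$ (depending on $\delta, \Phi, \varepsilon$), so that the constants do not chase their tails.
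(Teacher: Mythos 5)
Your proof of \eqref{2-8} and the overall architecture of the second estimate (integrate by parts so that no second derivative of $G$ survives; kill the boundary-layer contribution via the weighted inequality of Lemma \ref{lemma1} by taking $\varepsilon$ small; kill the transition-region contribution via the factors $\pi',\pi''=O(\mfd^{-1})$ together with Lemma \ref{lemmaA1}; choose $\varepsilon$ first, then $\mfd$) coincide with the paper's. But there is a genuine gap at the decisive step: you bound the boundary-layer term by $\bigl(\int_\Omega |\Bv|^2|\partial_{x_2}G|^2\,dx\bigr)^{1/2}\|\nabla\Bv\|_{L^2}$ and then invoke Lemma \ref{lemma1} to get the factor $C\Phi\varepsilon$. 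Lemma \ref{lemma1} applies only to functions $w$ vanishing on the upper boundary $S_2$, and under the full slip condition $\Bv$ does not vanish there --- only its normal component does, while the tangential trace is in general nonzero. The weighted estimate is simply false without the vanishing hypothesis: taking $w\equiv 1$ gives a zero right-hand side, while Cauchy--Schwarz applied to $\int_0^\varepsilon\mu'(t;\varepsilon)\,dt=-1$ shows $\int_{f_2(x_1)-\varepsilon}^{f_2(x_1)}|\partial_{x_2}G|^2\,dx_2\ge \Phi^2/\varepsilon$, so $\|\partial_{x_2}G\|_{L^2}$ actually \emph{blows up} as $\varepsilon\to0$ and a crude Cauchy--Schwarz against a nontrivial trace of $\Bv$ cannot produce any power of $\varepsilon$. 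This leaves exactly the ``bounded but not small'' leftover you flag as the main obstacle, unresolved.

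The paper's resolution --- the real content of the lemma, and the ``Hardy type inequality for flows with full slip boundary conditions'' announced in the abstract --- is the identity $\partial_{x_1}G=-f_2'(x_1)\,\partial_{x_2}G$ in \eqref{2-13}. After the integrations by parts (which your plan performs correctly), the paper adds and subtracts terms so that the weight $\partial_{x_2}G$ always multiplies the combination $v_2-f_2'(x_1)v_1$ times a first derivative of $\Bv$; since the upper boundary is $x_2=f_2(x_1)$, this combination is proportional to $\Bv\cdot\Bn$ and hence vanishes on $S_2$, so Lemma \ref{lemma1} applies to $w=v_2-f_2'v_1$ and yields $C\varepsilon\Phi\|\nabla\Bv\|_{L^2}^2$. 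Without this recombination the argument does not close. (Your handling of the $\pi$-region is essentially correct: $|\pi'|\le 4/\mfd$, $|\pi''|\le 16/\mfd^2$ plus Lemma \ref{lemmaA1} give $C(\varepsilon)\Phi\,\mfd^{-1}\|\nabla\Bv\|_{L^2}^2$; and your proof of \eqref{2-8} is the paper's.)
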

\begin{proof}
	Noting $\Bg=\frac\Phi2 \Be_1$ for any $\Bx\in \Omega$ with $|x_1|\ge 2\mfD$, one has
	\begin{equation}\label{2-9}
		\int_{\Omega}|\nabla \Bg|^2+|\Bg\cdot \nabla \Bg|^2\,dx= \int_{\Omega_{2\mfD}}|\nabla \Bg|^2+|\Bg\cdot \nabla \Bg|^2\,dx\leq |\Omega_{2\mfD}| \sup_{\Bx\in \Omega_{2\mfD}} \left( |\nabla \Bg|^2+|\Bg\cdot \nabla \Bg|^2 \right).
	\end{equation}
Using \eqref{2-12}-\eqref{2-11} and Lemma \ref{lemma1}, one has
\begin{equation*}
	\sup_{\Bx\in \Omega_{2\mfD}} \left( |\nabla \Bg|^2+|\Bg\cdot \nabla \Bg|^2 \right) \leq C(\varepsilon,\mfD)(\Phi^2 + \Phi^4).
\end{equation*}
This, together with \eqref{2-9}, gives \eqref{2-8}. Next, from \eqref{2-3}, one has the following equality
\begin{equation}\label{2-12-1}
	\partial_{x_1}G(x_1,x_2;\varepsilon)=-f_2'(x_1)\partial_{x_2}G(x_1,x_2;\varepsilon).
\end{equation}
Using \eqref{2-12} gives
\begin{equation}\label{2-15}
\begin{aligned}
&\int_{\Omega}\Bv \cdot\nabla g_1 v_1\,dx\\
=&\int_{\Omega}(v_1\partial_{x_1}+v_2\partial_{x_2})\left(\partial_{x_2}G(x_1,x_2;\varepsilon)+\left(\frac\Phi2-\partial_{x_2}G(x_1,x_2;\varepsilon)\right)\pi(x_1;\mfD)\right)v_1\,dx\\
=&\int_{\Omega}(v_1\partial_{x_1}+v_2\partial_{x_2})\left((1-\pi(x_1;\mfD))\partial_{x_2}G(x_1,x_2;\varepsilon)+\frac\Phi2\pi(x_1;\mfD)\right)v_1\,dx\\
=&\int_{\Omega}(v_1^2\partial_{x_1}+v_1v_2\partial_{x_2})\left((1-\pi(x_1;\mfD))\partial_{x_2}G(x_1,x_2;\varepsilon)\right)\,dx+\int_{\Omega}\frac\Phi2 v_1^2\pi'(x_1;\mfD)\,dx.
\end{aligned}
\end{equation}
It follows from \eqref{2-1} and Lemma \ref{lemmaA1} that
\begin{equation}\label{2-21}
\left|\int_{\Omega}\frac\Phi2 v_1^2\pi'(x_1;\mfD)\,dx\right|\leq \frac{C\Phi}{\mfD} \|\Bv\|_{L^2(\Omega)}^2\leq \frac{C\Phi}{\mfD} \|\nabla\Bv\|_{L^2(\Omega)}^2.
\end{equation}
Noting $\partial_{x_2}G(x_1,x_2;\varepsilon)$ vanishes near the boundary $\partial\Omega$ and $\Bv$ is divergence free in $\Omega$, one uses integration by parts to obtain
\begin{equation}\label{2-21-1}
\begin{aligned}
&\int_{\Omega}(v_1^2\partial_{x_1}+v_1v_2\partial_{x_2})\left((1-\pi(x_1;\mfD))\partial_{x_2}G(x_1,x_2;\varepsilon)\right)\,dx\\
=&-\int_{\Omega}(v_1\partial_{x_1} v_1+v_2\partial_{x_2}v_1)(1-\pi(x_1;\mfD))\partial_{x_2}G(x_1,x_2;\varepsilon)\,dx\\
=&-\int_{\Omega}(1-\pi(x_1;\mfD))\left(v_1\partial_{x_1} v_1\partial_{x_2}G(x_1,x_2;\varepsilon) +v_2\partial_{x_2}v_1\partial_{x_2}G(x_1,x_2;\varepsilon)\right)\,dx\\
=&-\int_{\Omega}(1-\pi(x_1;\mfD))\left(v_1\partial_{x_1} v_1\partial_{x_2}G(x_1,x_2;\varepsilon)-v_1\partial_{x_2} v_1\partial_{x_1}G(x_1,x_2;\varepsilon)\right)\,dx\\
&-\int_{\Omega}(1-\pi(x_1;\mfD))\left(v_2\partial_{x_2}v_1\partial_{x_2}G(x_1,x_2;\varepsilon)+v_1\partial_{x_2} v_1\partial_{x_1}G(x_1,x_2;\varepsilon) \right)\,dx\\
=&-\int_{\Omega}\frac12 (1-\pi(x_1;\mfD))\left(\partial_{x_1} (v_1^2)\partial_{x_2}G(x_1,x_2;\varepsilon)-\partial_{x_2} (v_1^2)\partial_{x_1}G(x_1,x_2;\varepsilon)\right)\,dx\\
&-\int_{\Omega}(1-\pi(x_1;\mfD))\left(v_2\partial_{x_2}v_1\partial_{x_2}G(x_1,x_2;\varepsilon)+v_1\partial_{x_2} v_1\partial_{x_1}G(x_1,x_2;\varepsilon) \right)\,dx\\
=&-\int_{\Omega}(1-\pi(x_1;\mfD))\partial_{x_2}v_1 \left(v_2\partial_{x_2}G(x_1,x_2;\varepsilon)+v_1\partial_{x_1}G(x_1,x_2;\varepsilon) \right)\,dx\\
&-\int_{\Omega}\frac12 \pi'(x_1;\mfD)v_1^2 \partial_{x_2}G(x_1,x_2;\varepsilon)\,dx\\
=&-\int_{\Omega}(1-\pi(x_1;\mfD))\partial_{x_2}v_1 (v_2-v_1 f_2'(x_1))\partial_{x_2}G(x_1,x_2;\varepsilon)\,dx\\
&-\int_{\Omega}\frac12 \pi'(x_1;\mfD)v_1^2 \partial_{x_2}G(x_1,x_2;\varepsilon)\,dx,
\end{aligned}
\end{equation}
where the equality \eqref{2-12-1} has been used to get the last equality. Note that on the upper boundary $S_2=\{\Bx\in \partial\Omega:~x_1\in \R, ~x_2=f_2(x_1)\}$, the impermeability condition $\Bv\cdot\Bn=0$ can be written as
\begin{equation*}
v_2(x_1,f_2(x_1))-f_2'(x_1)v_1(x_1,f_2(x_1))=0.
\end{equation*}
Then applying Cauchy-Schwarz inequality and Lemma \ref{lemma1} gives
\begin{equation}\label{2-17}
\begin{aligned}
&\left|\int_{\Omega}(1-\pi(x_1;\mfD))\partial_{x_2}v_1 (v_2-v_1 f_2'(x_1))\partial_{x_2}G(x_1,x_2;\varepsilon)\,dx\right|\\
\leq&\|\partial_{x_2}v_1\|_{L^2(\Omega)}\left(\int_{\Omega}|(v_2-v_1 f_2'(x_1))\partial_{x_2}G(x_1,x_2;\varepsilon)|^2\,dx\right)^\frac12\\
\leq& C\varepsilon\Phi\|\partial_{x_2}v_1\|_{L^2(\Omega)}\|\partial_{x_2}(v_2-v_1 f_2'(x_1))\|_{L^2(\Omega)}\\
\leq& C\varepsilon\Phi\|\nabla\Bv\|_{L^2(\Omega)}^2.
\end{aligned}
\end{equation}
Lemmas \ref{lemmaA1} and  \ref{lemma1}, together with \eqref{2-2-1}, yield
\begin{equation}\label{2-17-1}
\left|\int_{\Omega}\frac12 \pi'(x_1;\mfD)v_1^2 \partial_{x_2}G(x_1,x_2;\varepsilon)\,dx\right|
\leq \frac{C(\varepsilon)\Phi}{\mfD}\|\Bv\|_{L^2(\Omega)}^2 \leq \frac{C(\varepsilon)\Phi}{\mfD}\|\nabla \Bv\|_{L^2(\Omega)}^2 .
\end{equation}

On the other hand, with the aid of the explicit form in \eqref{2-11}, one has
\begin{equation}\label{2-20}
\begin{aligned}
&\int_{\Omega}\Bv \cdot\nabla g_2 v_2\,dx=\int_{\Omega_{\mfD}}-(v_1v_2\partial_{x_1}+v_2^2\partial_{x_2})\partial_{x_1}G(x_1,x_2;\varepsilon)\,dx\\
&+\int_{\Omega\setminus\Omega_{\mfD}}(v_1v_2\partial_{x_1}+v_2^2\partial_{x_2})\left[\pi'(x_1;\mfD) \left(G(x_1,x_2;\varepsilon)-\frac\Phi2(x_2+1)\right)\right]\,dx.
\end{aligned}
\end{equation}
Since $\partial_{x_1}G(x_1,x_2;\varepsilon)=\Phi\partial_{x_1}\mu(1-x_2;\varepsilon)=0$ near  $\Sigma(\pm \mfD)$ and $\partial_{x_1}G(x_1,x_2;\varepsilon)$ vanishes near the boundary $\partial\Omega\cap \partial\Omega_{\mfD}$, the integration by parts together with \eqref{2-12-1} gives
\begin{equation*}
\begin{aligned}
	&\int_{\Omega_{\mfD}}-(v_2v_1\partial_{x_1}+v_2^2\partial_{x_2})\partial_{x_1}G(x_1,x_2;\varepsilon)\,dx\\
	=&\int_{\Omega_{\mfD}}(v_1\partial_{x_1}v_2+v_2\partial_{x_2}v_2)\partial_{x_1}G(x_1,x_2;\varepsilon)\,dx\\
	=&\int_{\Omega_{\mfD}}v_1\partial_{x_1}v_2\partial_{x_1}G(x_1,x_2;\varepsilon)+ v_2\partial_{x_1}v_2\partial_{x_2}G(x_1,x_2;\varepsilon)\,dx\\
	&+\int_{\Omega_{\mfD}}\partial_{x_2}\left(\frac{v_2^2}{2}\right)\partial_{x_1}G(x_1,x_2;\varepsilon)-\partial_{x_1}\left(\frac{v_2^2}{2}\right)\partial_{x_2}G(x_1,x_2;\varepsilon)\,dx\\
	=&\int_{\Omega_{\mfD}}\partial_{x_1}v_2 \left[ v_1\partial_{x_1}G(x_1,x_2;\varepsilon)+ v_2\partial_{x_2}G(x_1,x_2;\varepsilon)\right]\,dx\\
	=&\int_{\Omega_{\mfD}}\partial_{x_1}v_2(v_2-f_2'(x_1)v_1)\partial_{x_2}G(x_1,x_2;\varepsilon)\,dx.
\end{aligned}
\end{equation*}
Therefore, similar to \eqref{2-17}, one uses Cauchy-Schwarz inequality and Lemma \ref{lemma1} to conclude
\begin{equation}\label{2-18}
	\begin{aligned}
		&\left|\int_{\Omega_{\mfD}}-(v_2v_1\partial_{x_1}+v_2^2\partial_{x_2})\partial_{x_1}G(x_1,x_2;\varepsilon)\,dx\right|\\
	\leq&\|\partial_{x_1}v_2\|_{L^2(\Omega_{\mfD})}\left(\int_{\Omega_{\mfD}}|(v_2-v_1 f_2'(x_1))\partial_{x_2}G(x_1,x_2;\varepsilon)|^2\,dx\right)^\frac12\\
	\leq& C\varepsilon\Phi\|\partial_{x_1}v_2\|_{L^2(\Omega_{\mfD})} \left\|\partial_{x_2}(v_2-v_1 f_2'(x_1))\right\|_{L^2(\Omega_\mfD)}\\
	\leq& C\varepsilon\Phi\|\nabla\Bv\|_{L^2(\Omega_{\mfD})}^2.
	\end{aligned}
	\end{equation}
Note that the function $G(x_1,x_2;\varepsilon)=\Phi\mu(1-x_2;\varepsilon)$ depends only on $x_2$ in the straight outlets $\Omega\setminus \Omega_{\mfD}$. Hence one has
\begin{equation*}
\begin{aligned}
&\int_{\Omega\setminus\Omega_{\mfD}}(v_1v_2\partial_{x_1}+v_2^2\partial_{x_2})\left[\pi'(x_1;\mfD) \left(G(x_1,x_2;\varepsilon)-\frac\Phi2(x_2+1)\right)\right]\,dx\\
=&\int_{\Omega\setminus\Omega_{\mfD}}v_1v_2\pi''(x_1;\mfD) \left(G(x_1,x_2;\varepsilon)-\frac\Phi2(x_2+1)\right)+v_2^2\pi'(x_1;\mfD) \left(\partial_{x_2}G(x_1,x_2;\varepsilon)-\frac\Phi2\right)\,dx.
\end{aligned}
\end{equation*}
It follows from \eqref{2-1} and Lemmas \ref{lemmaA1} that one has
\begin{equation}\label{2-22}
\begin{aligned}
	&\left|\int_{\Omega\setminus\Omega_{\mfD}}(v_1v_2\partial_{x_1}+v_2^2\partial_{x_2})\left[\pi'(x_1;\mfD) \left(G(x_1,x_2;\varepsilon)-\frac\Phi2(x_2+1)\right)\right]\,dx\right|\\
\leq &\frac{C\Phi }{\mfD^2}\int_{\Omega\setminus\Omega_{\mfD}}|v_1v_2| \,dx+\frac{C(\varepsilon)\Phi }{\mfD}\int_{\Omega\setminus\Omega_{\mfD}} v_2^2\,dx\\
\leq& \frac{C(\varepsilon)\Phi }{\mfD}\|\Bv\|_{L^2(\Omega\setminus\Omega_{\mfD})}^2\\
\leq& \frac{C(\varepsilon)\Phi }{\mfD}\|\nabla \Bv\|_{L^2(\Omega\setminus\Omega_{\mfD})}^2.
	\end{aligned}
	\end{equation}
Combining \eqref{2-15}-\eqref{2-22} gives
\begin{equation*}
\begin{aligned}
\left|\int_{\Omega}\Bv \cdot\nabla \Bg\cdot \Bv\,dx\right|=&\left|\int_{\Omega}\Bv \cdot\nabla  g_1 v_1\,dx+\int_{\Omega}\Bv \cdot\nabla  g_2 v_2\,dx\right|\\
\leq&\frac{C(\varepsilon)\Phi}{\mfD} \|\nabla\Bv\|_{L^2(\Omega)}^2 +C\varepsilon \Phi\|\nabla\Bv\|_{L^2(\Omega)}^2.
\end{aligned}
\end{equation*}
Then for any $\delta>0$ and $\Phi$, one can choose sufficiently small $\varepsilon$ and sufficiently large $\mfD$ such that
\begin{equation*}
\left|\int_{\Omega} \Bv\cdot \nabla \Bg\cdot \Bv\,dx\right|\leq \delta \|\nabla\Bv\|_{L^2(\Omega)}^2.
\end{equation*}
This finishes the proof of the lemma.
\end{proof}

\section{Existence and far field behavior of the solutions}\label{secex}
As long as the flux carrier $\Bg$ has been constructed in Section 3, we prove the existence of solutions to the problem \eqref{NS1} in this section. More precisely, we seek for the solutions to problem \eqref{NS1} as the limit of the solutions of the following approximate problem on the bounded domain $\Omega_{T}$,
\begin{equation}\label{aNS}
\left\{
\begin{aligned}
&-\Delta \Bv+\Bv\cdot \nabla \Bg +\Bg\cdot \nabla \Bv+\Bv\cdot \nabla \Bv  +\nabla p=\Delta \Bg-\Bg\cdot \nabla \Bg   ~~~~&&\text{ in }\Omega_{T},\\
&{\rm div}~\Bv=0&&\text{ in }\Omega_{T},\\
&\Bv\cdot \Bn=0,~\Bn\cdot \BD(\Bv)\cdot \Bt=0&&\text{ on }\partial\Omega_{T}\cap \partial\Omega,\\
&\Bv=0&&\text{ on }\Sigma(\pm T).
\end{aligned}\right.
\end{equation}
The corresponding linearized problem of \eqref{aNS} is
\begin{equation}\label{laNS}
\left\{
\begin{aligned}
	&-\Delta \Bv+\Bv\cdot \nabla \Bg +\Bg\cdot \nabla \Bv +\nabla p=\Bh   ~~~~&&\text{ in }\Omega_{T},\\
	&{\rm div}~\Bv=0&&\text{ in }\Omega_{T},\\
	&\Bv\cdot \Bn=0,~\Bn\cdot \BD(\Bv)\cdot \Bt=0&&\text{ on }\partial\Omega_{T}\cap \partial\Omega,\\
	&\Bv=0&&\text{ on }\Sigma(\pm T).
\end{aligned}\right.
\end{equation}
The weak solutions of problems \eqref{aNS} and \eqref{laNS} can be defined as follows.
\begin{definition}
A vector field $\Bv\in \mcH_\sigma(\Omega_{T})$ is a weak solution of the problem \eqref{aNS} and \eqref{laNS} if for any $\Bp\in \mcH_\sigma(\Omega_{T})$, $\Bv$ satisfies
\begin{equation}\label{2-41}
\begin{aligned}
\int_{\Omega_{T}}2\BD(\Bv):\BD(\Bp)+(\Bv\cdot \nabla \Bg +(\Bg+\Bv)\cdot \nabla \Bv)\cdot \Bp\,dx=\int_{\Omega_{T}}\Delta\Bg\cdot\Bp-\Bg\cdot \nabla \Bg\cdot \Bp\,dx
\end{aligned}
\end{equation}
and
\begin{equation}\label{2-42}
\begin{aligned}
\int_{\Omega_{T}}2\BD(\Bv):\BD(\Bp)+(\Bv\cdot \nabla \Bg +\Bg\cdot \nabla \Bv)\cdot \Bp\,dx=\int_{\Omega_{T}}\Bh\cdot \Bp\,dx,
\end{aligned}
\end{equation}
 respectively.
\end{definition}

Next, we use Leray-Schauder fixed point theorem (cf. \cite[Theorem 11.3]{GT}) to prove the existence of solutions to the approximate  problem \eqref{aNS}. To this end,  the existence of solutions to  the linearized problem \eqref{laNS} is first established by the following lemma.
\begin{lemma}\label{lemma5}
For any $T>L+1$ and any  $\Bh\in L^\frac43(\Omega_{T})$, there exists a unique $\Bv\in \mcH_\sigma(\Omega_T)$ such that for any $\Bp\in \mcH_\sigma(\Omega_T)$, it holds that
\begin{equation}\label{2-30-1}
\int_{\Omega_T}2\BD(\Bv):\BD(\Bp)+(\Bv\cdot \nabla \Bg +\Bg\cdot \nabla \Bv)\cdot \Bp\,dx
=\int_{\Omega_T}\Bh\cdot \Bp\,dx.
\end{equation}
\end{lemma}
\begin{proof} The proof is based on Lax-Milgram theorem. For any $\Bv,\Bu\in \mcH_\sigma(\Omega_{T})$, define the bilinear functional on $\mcH_\sigma(\Omega_{T})$
	\begin{equation}\label{2-30}
	B[\Bv,\Bu]=\int_{\Omega_{T}}2\BD(\Bv):\BD(\Bu)+(\Bv\cdot \nabla \Bg +\Bg\cdot \nabla \Bv)\cdot \Bu\,dx.
	\end{equation}
Since $\Bg$ is bounded on $\Omega$, using H\"{o}lder inequality yields
	\begin{equation}\label{2-31}
	|B[\Bv,\Bu]|\leq C\|\Bv\|_{H^1(\Omega_{T})}\|\Bu\|_{H^1(\Omega_{T})}.
	\end{equation}
	  According to Lemma \ref{lemmaA3}, it holds that
	\begin{equation}\label{2-32}
		\mfc\|\nabla \Bv\|_{L^2(\Omega_{T})}^2\leq 2\|\BD(\Bv)\|_{L^2(\Omega_{T})}^2,
	\end{equation}
	where $\mfc$ is independent of $T$, and is given in Lemma \ref{lemmaA3}. For any $\Bv\in \mcH_\sigma(\Omega_T)$, one has also $\Bv\in \mcH_\sigma(\Omega)$ by extending $\Bv$ to the whole channel $\Omega$ by zero. Using Lemma \ref{lemma2} and setting $\delta=\frac{\mfc}{2}$, for arbitrary flux $\Phi$, one choose sufficiently small $\varepsilon$ and sufficiently large $\mfD$ such that
	\begin{equation}\label{2-34}
	\left|\int_{\Omega_{T}}\Bv\cdot \nabla \Bg\cdot \Bv\,dx\right|\leq \frac{\mfc}{2} \|\nabla \Bv\|_{L^2(\Omega_T)}^2.
	\end{equation}
	Moreover, using integration by parts gives
	\begin{equation}\label{2-33}
	\int_{\Omega_{T}}\Bg\cdot \nabla \Bv\cdot \Bv\,dx=0.
	\end{equation}
	Therefore, combining \eqref{2-30} and \eqref{2-32}-\eqref{2-33}, and using Lemma \ref{lemmaA1}, one has
	\begin{equation}\label{2-35}
	B[\Bv,\Bv]\ge \frac{\mfc}{2(1+M_1^2)}\|\Bv\|_{H^1(\Omega_{T})}^2.
	\end{equation}
By Lemma \ref{lemmaA1}, the constant $M_1$ is uniformly bounded for any $T$.
	
For any $\Bp\in \mcH_\sigma(\Omega_T)$, one uses H\"{o}lder inequality and Lemma \ref{lemmaA2} to obtain
\begin{equation}\label{2-10}
	\left|\int_{\Omega_T}\Bh\cdot \Bp\,dx\right|\leq \|\Bh\|_{L^\frac43(\Omega_T)}\|\Bp\|_{L^4(\Omega_T)}\leq C\|\Bh\|_{L^\frac43(\Omega_T)}\|\nabla \Bp\|_{L^2(\Omega_T)}.
\end{equation}
It follows from  \eqref{2-31}, \eqref{2-35}-\eqref{2-10}, and Lax-Milgram theorem that there exists a unique $\Bv\in \mcH_\sigma(\Omega_T)$ such that  \eqref{2-30-1} holds for any $\Bp\in \mcH_\sigma(\Omega_T)$. This finishes the proof of the lemma.
\end{proof}

Now we are ready to prove the existence of solutions for the approximate problem \eqref{aNS}.
\begin{pro}\label{appro-existence}
For any $T>L+1 $, the problem \eqref{aNS} has a weak solution $\Bv\in \mcH_\sigma(\Omega_T)$ satisfying
\begin{equation}\label{2-10-1}
\|\Bv\|_{H^1(\Omega_T)}^2\leq C_0\int_{\Omega_T} |\nabla \Bg|^2+|\Bg\cdot\nabla \Bg|^2\,dx,
\end{equation}
where the constant $C_0$ is independent of $T$.
\end{pro}
\begin{proof}
Lemma \ref{lemma5} defines a map $\mathcal{T}$ which maps $\Bh\in L^\frac43(\Omega)$ to $\Bv\in \mcH_\sigma(\Omega_T)$. For any $\Bw\in \mcH_\sigma(\Omega_T)$, using H\"{o}lder inequality and Lemma \ref{lemmaA2} gives
\begin{equation*}
\|\Bw\cdot\nabla \Bw\|_{L^\frac43}\leq \|\Bw\|_{L^4(\Omega)}\|\nabla\Bw\|_{L^2(\Omega)}\leq C\|\nabla\Bw\|_{L^2(\Omega_T)}^2.
\end{equation*}
Note that $\Delta\Bg-\Bg\cdot \nabla \Bg\in L^\frac43(\Omega)$. Hence $\Bh=\Delta \Bg-\Bg\cdot\nabla \Bg-\Bw\cdot\nabla\Bw \in L^\frac43(\Omega_T)$ and one could  define the map
	\begin{equation*}
		K(\Bw):=\mathcal{T}(\Delta \Bg-\Bg\cdot\nabla \Bg-\Bw\cdot\nabla\Bw).
	\end{equation*}
	It follows from Lemma \ref{lemma5} that $K$ is a map from $\mcH_\sigma(\Omega_T)$ to $\mcH_\sigma(\Omega_T)$. Solving the problem \eqref{aNS} is equivalent to finding a fixed point for
\begin{equation*}
K(\Bv)=\Bv.
\end{equation*}

In order to apply Leray-Schauder fixed point theorem, we show that $K:\, \mcH_\sigma(\Omega_T)\to \mcH_\sigma(\Omega_T)$ is continuous and compact. First, for any $\Bv^1,\Bv^2\in \mcH_\sigma(\Omega_T)$, integration by parts yields
\begin{equation*}
\begin{aligned}
&\left|\int_{\Omega_T}(\Bv^1\cdot\nabla \Bv^1-\Bv^2\cdot\nabla \Bv^2)\cdot \Bp\,dx\right|\\
=&\left|\int_{\Omega_T}\Bv^1\cdot\nabla \Bp\cdot \Bv^1-\Bv^2\cdot\nabla\Bp\cdot \Bv^2\,dx\right|\\
=&\left|\int_{\Omega_T}\Bv^1\cdot\nabla \Bp\cdot (\Bv^1-\Bv^2)+(\Bv^2-\Bv^1)\cdot\nabla \Bp\cdot \Bv^2\,dx\right|\\
\leq& C(\|\Bv^1\|_{L^4(\Omega_T)}+\|\Bv^2\|_{L^4(\Omega_T)})\|\Bv^1-\Bv^2\|_{L^4(\Omega_T)}\|\Bp\|_{H^1(\Omega_T)}.
\end{aligned}
\end{equation*}
Hence it holds that
\begin{equation*}
\begin{aligned}
\|K(\Bv^1)-K(\Bv^2)\|_{H^1(\Omega_T)}\leq&C\|\mathcal{T}(\Bv^1\cdot\nabla \Bv^1-\Bv^2\cdot\nabla \Bv^2)\|_{H^1(\Omega_T)}\\
\leq&C(\|\Bv^1\|_{L^4(\Omega_T)}+\|\Bv^2\|_{L^4(\Omega_T)})\|\Bv^1-\Bv^2\|_{L^4(\Omega_T)}.
\end{aligned}
\end{equation*}
This implies that $K$ is a continuous map from $\mcH_{\sigma}(\Omega_T)$ into itself. Moreover, the compactness of $K$ follows from the compactness of the Sobolev embedding $H^1(\Omega_T) \hookrightarrow L^4(\Omega_T)$.

Finally, if $\Bv\in \mcH_\sigma(\Omega_T)$ satisfies $\Bv=\sigma K(\Bv)$ with $\sigma\in[0,1]$, then for any $\Bp\in \mcH_\sigma(\Omega_T)$,
\begin{equation}\label{2-14-1}
\int_{\Omega_T}2\BD(\Bv):\BD(\Bp)+(\Bv\cdot \nabla \Bg +\Bg\cdot \nabla \Bv)\cdot \Bp\,dx
=\sigma \int_{\Omega_T}(\Delta \Bg-\Bg\cdot \nabla \Bg-\Bv\cdot \nabla \Bv)\cdot \Bp\,dx.
\end{equation}
In particular, taking $\Bp=\Bv$ in \eqref{2-14-1} yields
\begin{equation}\label{2-14}
\int_{\Omega_T}2|\BD(\Bv)|^2+(\Bv\cdot \nabla \Bg +\Bg\cdot \nabla \Bv)\cdot \Bv\,dx
=\sigma \int_{\Omega_T}(\Delta \Bg-\Bg\cdot \nabla \Bg-\Bv\cdot \nabla \Bv)\cdot \Bv\,dx.
\end{equation}
Noting that $\Bg\cdot \Bn=\Bv\cdot \Bn=0$ on $\partial\Omega\cap \partial\Omega_T$, and $\Bv=0$ on $\Sigma(\pm T)$, one uses integration by parts to obtain
\begin{equation}\label{2-13}
\begin{aligned}
&\left|\int_{\Omega_T}(\Delta \Bg-\Bg\cdot \nabla \Bg-\Bv\cdot \nabla \Bv)\cdot \Bv\,dx\right|\\
=&\left|\int_{\Omega_T}-2\BD(\Bg):\BD(\Bv)-\Bg\cdot\nabla \Bg\cdot \Bv\,dx\right|\\
\leq& C\left(\int_{\Omega_{T}} |\nabla \Bg|^2+|\Bg\cdot \nabla \Bg|^2\,dx\right)^\frac12\|\nabla \Bv\|_{L^2(\Omega_T)}.
\end{aligned}
\end{equation}
This, together with \eqref{2-35} and \eqref{2-14}, gives
\begin{equation*}
	\|\Bv\|_{H^1(\Omega_T)}^2\leq C_0\int_{\Omega_T}|\nabla \Bg|^2+|\Bg\cdot\nabla \Bg|^2\,dx.
\end{equation*}
Then Leray-Schauder fixed point theorem shows that there exists a solution $\Bv \in \mcH_\sigma(\Omega_T)$ of the problem $\Bv=K(\Bv)$. Hence the proof of the proposition is completed.
\end{proof}

For  $\Omega_T$ with $T\in \mathbb{Z}^+$ and $T>L+1$, let $\Bv^T$ be the solution of the approximate problem \eqref{aNS}, which is obtained in Proposition \ref{appro-existence}. In particular, $\Bv^T \in  \mcH_\sigma(\Omega)$ if we extend $\Bv^T$ by zero to the whole channel $\Omega$. By Proposition \ref{appro-existence}, $\{\Bv^T\}$ is a bounded sequence in $\mcH_\sigma(\Omega)$. Hence there exists a subsequence, which converges weakly in $\mcH_\sigma(\Omega)$ to the solution $\Bv$ of the problem \eqref{NS1}. Moreover, $\Bv$ satisfies the estimate
\begin{equation*}
	\| \Bv\|_{H^1(\Omega)}\leq C\left( \int_{\Omega}|\nabla \Bg|^2+|\Bg\cdot \nabla \Bg|^2\,dx\right)^\frac12=:C_1,
\end{equation*}
where the constant $C_1$ is a constant depending only on the flux $\Phi$ and $\Omega$. Then we conclude the existence of the solutions to the problem \eqref{NS}, \eqref{far field}, and \eqref{BC}.
\begin{pro}\label{existence}
	The problem \eqref{NS}, \eqref{far field}, and \eqref{BC} has a solution $\Bu=\Bg+\Bv$ satisfying $\Bv\in \mcH_\sigma(\Omega)$ and
	\begin{equation*}
		\|\Bv\|_{H^1(\Omega)}\leq C_1.
	\end{equation*}
In particular, the constant $C_1$ goes to zero of the same order of $\Phi$ when $\Phi\to 0$.
\end{pro}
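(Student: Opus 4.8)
The plan is to obtain $\Bv$ as the weak limit of the approximate solutions furnished by Proposition \ref{appro-existence}, as already sketched in the paragraph preceding the statement; the only points needing care are the passage to the limit in the nonlinear term, the extension of the weak formulation to all test functions in $H_\sigma(\Omega)$, and the claim on the $\Phi$-dependence of $C_3$. First I would fix $\varepsilon$ and $\mfd$ (depending on $\Phi$ and $\Omega$) so that, via Lemma \ref{lemma2}, the coercivity bound \eqref{2-34} holds with $\delta=\mfc/2$, and for each $T\in\Z^+$ let $\Bv^T\in H_\sigma(\Omega_T)$ be a weak solution of \eqref{aNS}, extended by zero to all of $\Omega$ so that $\Bv^T\in H_\sigma(\Omega)$. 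The right-hand side of \eqref{2-43} is independent of $T$, so $\{\Bv^T\}$ is bounded in $H_\sigma(\Omega)$; passing to a subsequence, $\Bv^T\rightharpoonup\Bv$ weakly in $H_\sigma(\Omega)$, and weak lower semicontinuity of the $H^1$ norm yields $\|\Bv\|_{H^1(\Omega)}\le C_3:=C\big(\int_\Omega|\nabla\Bg|^2+|\Bg\cdot\nabla\Bg|^2\,dx\big)^{1/2}$.

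Next I would check that $\Bv$ satisfies \eqref{weak solution} for every $\Bp\in H_\sigma(\Omega)$. It is enough to verify this for $\Bp\in\mathcal{C}_\sigma(\Omega)$: such fields are dense in $H_\sigma(\Omega)$, and — using the global embedding $H^1(\Omega)\hookrightarrow L^4(\Omega)$, valid because the cross section of $\Omega$ is bounded — every term in \eqref{weak solution} is continuous in $\Bp$ with respect to the $H^1(\Omega)$ norm once $\Bv\in H^1(\Omega)$ is fixed, in particular $\big|\int_\Omega\Bv\cdot\nabla\Bv\cdot\Bp\,dx\big|=\big|\int_\Omega\Bv\cdot\nabla\Bp\cdot\Bv\,dx\big|\le\|\Bv\|_{L^4(\Omega)}^2\|\nabla\Bp\|_{L^2(\Omega)}$. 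So fix $\Bp\in\mathcal{C}_\sigma(\Omega)$ supported in $\Omega_{-N,N}$; for $T>N$ it is an admissible test function in \eqref{2-41}, and one lets $T\to\infty$ along the subsequence. The quadratic-in-$\BD$ term and the terms $\int\Bv^T\cdot\nabla\Bg\cdot\Bp$, $\int\Bg\cdot\nabla\Bv^T\cdot\Bp$ — all linear in $\Bv^T$ with coefficients supported on the fixed bounded set $\Omega_{-N,N}$ — converge by the weak $H^1$ convergence (which entails strong $L^2(\Omega_{-N,N})$ convergence of $\Bv^T$). For the convective term, writing $\int_{\Omega_T}\Bv^T\cdot\nabla\Bv^T\cdot\Bp\,dx=-\int_{\Omega_{-N,N}}(\Bv^T\otimes\Bv^T):\nabla\Bp\,dx$, the Rellich–Kondrachov theorem on $\Omega_{-N,N}$ gives $\Bv^T\to\Bv$ strongly in $L^4(\Omega_{-N,N})$, hence $\Bv^T\otimes\Bv^T\to\Bv\otimes\Bv$ in $L^2(\Omega_{-N,N})$ and the term passes to the limit. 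Thus \eqref{weak solution} holds on $\mathcal{C}_\sigma(\Omega)$, hence on $H_\sigma(\Omega)$ by density; then $\Bu=\Bg+\Bv$ solves \eqref{NS}, \eqref{BC} by the derivation of \eqref{NS1}, and since $\Bv$ is an $H^1$-limit of fields vanishing outside bounded sets one has $\|\Bv\|_{H^1(\Omega\setminus\Omega_T)}\to0$, which together with $\Bg\equiv\BU$ outside $\Omega_{7\mfd/4}$ gives the far-field condition \eqref{far field}.

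Finally, for the last assertion, Lemma \ref{lemma2} gives $\int_\Omega|\nabla\Bg|^2+|\Bg\cdot\nabla\Bg|^2\,dx\le C(\varepsilon,\mfd)\Phi^2$, so $C_3\le C\sqrt{C(\varepsilon,\mfd)}\,\Phi$; I would then observe that the smallness condition on $\varepsilon,\mfd$ forced by \eqref{2-34} (via the bound $\frac{C(\varepsilon)\Phi}{\mfd}+C\varepsilon\Phi\le\mfc/2$ from Lemma \ref{lemma2}) only becomes easier to satisfy as $\Phi$ decreases, so a single pair $\varepsilon_0,\mfd_0$ works for all $\Phi\le1$, making $C(\varepsilon_0,\mfd_0)$ a fixed constant and hence $C_3=O(\Phi)$ as $\Phi\to0$. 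I expect the main obstacle to be the passage to the limit in the nonlinear convective term combined with the reduction to compactly supported test functions; everything else is a standard weak-compactness argument resting on the construction of $\Bg$ in Section \ref{secflux} and the energy estimates of Lemmas \ref{lemmaA3} and \ref{lemma2}.
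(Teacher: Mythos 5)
Your proposal follows the same route as the paper: fix $\varepsilon,\mfd$ via Lemma \ref{lemma2} so that the coercivity constant is $\mfc/2$, take the approximate solutions $\Bv^T$ of Proposition \ref{appro-existence} extended by zero, use the $T$-independent bound \eqref{2-43} to extract a weak limit in $H_\sigma(\Omega)$, and identify the limit as a solution; the paper states this in the paragraph preceding the proposition without spelling out the passage to the limit in the convective term or the order-$\Phi$ behavior of $C_3$, both of which you supply correctly. No gaps.
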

When the existence of weak solutions is established, one can further obtain the corresponding pressure by using the following lemma, whose proof can be found in \cite[Theorem \uppercase\expandafter{\romannumeral3}.5.3]{Ga}.
\begin{pro}\label{pressure}
The vector field $\Bv\in \mcH_\sigma(\Omega)$ is a weak solution of the problem \eqref{NS1} if and only if there exists a function $p\in L^2_{loc}(\overline{\Omega})$ such that for any $\Bp\in \mcH(\Omega)$, it holds that
\begin{equation}\label{2-44}
\begin{aligned}
&\int_{\Omega}2\BD(\Bv):\BD(\Bp)+(\Bv\cdot \nabla \Bg +(\Bg+\Bv)\cdot \nabla \Bv)\cdot \Bp\,dx-\int_{\Omega}p\operatorname{div}\Bp\,dx\\
=&\int_{\Omega}(\Delta \Bg-\Bg\cdot \nabla \Bg)\cdot \Bp\,dx.
\end{aligned}
\end{equation}
\end{pro}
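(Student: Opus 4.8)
This is the standard de~Rham--type characterization of the pressure (cf. \cite[Theorem~III.5.3]{Ga}); the plan is to carry out that argument, adapted to the slip boundary condition and the unbounded channel. The ``if'' direction is immediate: given $p\in L^2_{loc}(\Omega)$ satisfying \eqref{2-44} for every $\Bp\in H(\Omega)$, restrict to test fields $\Bp\in H_\sigma(\Omega)\subset H(\Omega)$; since $\operatorname{div}\Bp=0$ the term $\int_{\Omega}p\operatorname{div}\Bp\,dx$ vanishes and \eqref{2-44} reduces exactly to the weak formulation \eqref{weak solution}, so $\Bv$ is a weak solution of \eqref{NS1}.

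For the converse, assume $\Bv\in H_\sigma(\Omega)$ is a weak solution and define the linear functional
\[
\langle\mathcal{F},\Bp\rangle:=\int_{\Omega}2\BD(\Bv):\BD(\Bp)+(\Bv\cdot\nabla\Bg+(\Bg+\Bv)\cdot\nabla\Bv)\cdot\Bp\,dx+\int_{\Omega}2\BD(\Bg):\BD(\Bp)+\Bg\cdot\nabla\Bg\cdot\Bp\,dx
\]
for $\Bp\in H(\Omega)$. Using $\Bv,\Bg\in H^1$ together with the two-dimensional embedding $H^1\hookrightarrow L^4$ (Lemma \ref{lemmaA2}), one checks that $\mathcal{F}$ is a bounded linear functional on $H(\Omega_{a,b})$ for every finite $a<b$, with $|\langle\mathcal{F},\Bp\rangle|\le C(a,b)(1+\|\Bv\|_{H^1(\Omega)}^2)\|\Bp\|_{H^1(\Omega_{a,b})}$, the convective terms being controlled by $\|\Bv\|_{L^4}\|\nabla\Bv\|_{L^2}\|\Bp\|_{L^4}$. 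By the definition of a weak solution, $\langle\mathcal{F},\Bp\rangle=0$ for every $\Bp\in H_\sigma(\Omega)$.

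The core step is to upgrade this vanishing to $\langle\mathcal{F},\Bp\rangle=\int_\Omega p\operatorname{div}\Bp\,dx$. Fix a finite interval $(a,b)$ and $\Bp\in\mathcal{C}(\Omega_{a,b})$. Then $\operatorname{div}\Bp\in L^2(\Omega_{a,b})$, and since $\Bp$ vanishes near $\Sigma(a),\Sigma(b)$ and $\Bp\cdot\Bn=0$ on the lateral boundary, the divergence theorem gives $\int_{\Omega_{a,b}}\operatorname{div}\Bp\,dx=0$. By Lemma \ref{lemmaA5} there is $\Ba\in H_0^1(\Omega_{a,b})$ with $\operatorname{div}\Ba=\operatorname{div}\Bp$ and $\|\nabla\Ba\|_{L^2(\Omega_{a,b})}\le M_5\|\operatorname{div}\Bp\|_{L^2(\Omega_{a,b})}$. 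Extending $\Ba$ by zero to $\Omega$, the field $\Bp-\Ba$ is divergence-free, has zero normal trace on $\partial\Omega$, and vanishes near the outlets, hence $\Bp-\Ba\in H_\sigma(\Omega)$; therefore $\langle\mathcal{F},\Bp\rangle=\langle\mathcal{F},\Ba\rangle$, and in particular $\langle\mathcal{F},\Bp\rangle$ depends on $\Bp$ only through $\operatorname{div}\Bp$. Combined with the Bogovskii estimate, the assignment $\operatorname{div}\Bp\mapsto\langle\mathcal{F},\Bp\rangle$ extends to a bounded linear functional on $L_0^2(\Omega_{a,b})$, so by the Riesz representation theorem there is $p_{a,b}\in L^2(\Omega_{a,b})$, unique up to an additive constant, with $\langle\mathcal{F},\Bp\rangle=\int_{\Omega_{a,b}}p_{a,b}\operatorname{div}\Bp\,dx$, first for $\Bp\in\mathcal{C}(\Omega_{a,b})$ and then for all $\Bp\in H(\Omega_{a,b})$ by density. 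Applying this on the exhausting sequence $\Omega_{-k,k}$, the representatives agree up to constants on overlaps; normalizing, e.g., by $\int_{\Omega_1}p\,dx=0$, yields a single $p\in L^2_{loc}(\Omega)$ with $\langle\mathcal{F},\Bp\rangle=\int_{\Omega}p\operatorname{div}\Bp\,dx$ for every $\Bp\in H(\Omega)$, which is precisely \eqref{2-44}.

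I expect the main obstacle to be exactly this core step: one must verify carefully that the Bogovskii corrector $\Ba$ built on $\Omega_{a,b}$ yields, after zero extension, an \emph{admissible} test field $\Bp-\Ba\in H_\sigma(\Omega)$ --- this is where the precise structure of $H(\Omega)$ (zero normal trace on the lateral boundary plus support away from the outlets, rather than full vanishing on $\partial\Omega$) enters, and it is also the reason no boundary term survives in $\int_\Omega p\operatorname{div}\Bp\,dx$. The subsequent gluing into an $L^2_{loc}$ function on the unbounded channel is then routine.
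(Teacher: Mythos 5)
Your argument is correct and coincides with the paper's approach: the paper gives no proof of Proposition \ref{pressure} but simply cites \cite[Theorem III.5.3]{Ga}, and your localization-plus-Bogovskii construction of the pressure is precisely the standard proof of that cited result, adapted as needed to the space $H_\sigma(\Omega)$. The only point worth noting is that the identity $\langle\mathcal{F},\Bp\rangle=\int_\Omega p\,\operatorname{div}\Bp\,dx$ is literally established only for test fields $\Bp$ whose divergence has bounded support (which is all the paper ever uses, e.g.\ $\Bp=\zeta^+\Bv$ in Proposition \ref{decay}), since for a general $\Bp\in H(\Omega)$ and $p$ merely in $L^2_{loc}$ the integral need not converge --- but this caveat is inherent in the statement as written, not a defect of your argument.
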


If the boundary $\partial\Omega$ is smooth, we can improve the global regularity of  the weak solutions $(\Bu,p)$ obtained in Propositions \ref{existence}-\ref{pressure}  and obtain the following regularity theorem. One may refer to \cite[Theorem C]{Mu1} for the details of the proof.
\begin{pro}\label{regularity}
For $C^\infty$-smooth functions $f_1,f_2$, the solution $(\Bu,p)$ to the problem \eqref{NS}, \eqref{far field}, and \eqref{BC}, which is obtained in Propositions \ref{existence} and \ref{pressure}, belongs to $C^\infty(\overline{\Omega})$.
\end{pro}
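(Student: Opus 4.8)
The plan is to regard the momentum equation as a stationary Stokes system with the quadratic term transferred to the right-hand side and to run an elliptic bootstrap, invoking the $L^s$ and Schauder theory for the Stokes operator under the full slip boundary condition \eqref{BC}. Writing $\Bu\cdot\nabla\Bu =: -\BF$, the pair $(\Bu,p)$ solves, in the weak sense furnished by Proposition \ref{pressure},
\[
-\Delta \Bu + \nabla p = \BF, \qquad \div \Bu = 0 \ \text{ in } \Omega, \qquad \Bu\cdot\Bn = 0,\ \Bn\cdot\BD(\Bu)\cdot\Bt = 0 \ \text{ on } \partial\Omega.
\]
Since smoothness is a local property and, by Proposition \ref{decay}, the solution converges exponentially to the smooth shear flow $\BU$ on the straight outlets, it suffices to prove $C^\infty$ regularity up to the boundary on each bounded piece $\overline{\Omega_{a,b}}$; on the straight parts the boundary is flat and the coefficients are constant, so those regions are a special case of the curved one.

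First I would use the two-dimensional Sobolev embeddings to initiate the iteration. By Proposition \ref{existence}, $\Bu=\Bg+\Bv$ with $\Bg\in C^\infty(\overline{\Omega})$ and $\Bv\in H^1$, so $\Bu\in L^q_{loc}$ for every $q<\infty$ while $\nabla\Bu\in L^2$. H\"older's inequality with $1/s=1/q+1/2$ then gives $\BF=-\Bu\cdot\nabla\Bu\in L^s_{loc}$ for every $s<2$. Applying the $L^s$ Stokes estimate for the slip problem (as developed in \cite{AACG,AR} for the full-slip condition, after flattening the curved portion of $\partial\Omega$) yields $\Bu\in W^{2,s}_{loc}$ and $p\in W^{1,s}_{loc}$ for such $s$. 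In two dimensions $W^{2,s}\hookrightarrow C^0$ and $\nabla\Bu\in W^{1,s}$ gains integrability through the embedding $W^{1,s}\hookrightarrow L^{s^*}$; feeding this back into $\BF$ raises the exponent, and after finitely many steps one reaches $\Bu\in W^{2,s}_{loc}$ with $s>2$, whence $\Bu\in C^{1,\gamma}_{loc}(\overline{\Omega})$.

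Once $\Bu\in C^{1,\gamma}$, the nonlinearity $\BF$ lies in $C^{0,\gamma}$, and I would switch to Schauder theory for the Stokes system with the slip boundary condition to obtain $(\Bu,p)\in C^{2,\gamma}\times C^{1,\gamma}$. Differentiating the equation and the boundary relations tangentially, together with repeated application of the same $L^s$ and Schauder estimates, upgrades the regularity by one order at each stage; since $f_1,f_2\in C^\infty$ the coefficients produced by straightening the boundary are smooth, so the procedure continues indefinitely and gives $(\Bu,p)\in C^\infty(\overline{\Omega})$.

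The principal obstacle is the boundary analysis: the slip conditions \eqref{BC} are not standard Dirichlet data, so one must verify that the Stokes operator together with the boundary operator $(\Bu\cdot\Bn,\,\Bn\cdot\BD(\Bu)\cdot\Bt)$ satisfies the Agmon--Douglis--Nirenberg complementing (Lopatinskii--Shapiro) condition, and one must control the variable, curvature-dependent coefficients (entering through $\partial_\tau\Bn$) that appear when the curved boundary is flattened, all while recovering the regularity of the pressure simultaneously with that of $\Bu$. These are exactly the estimates treated in \cite{AACG,AR,Mu1}, which I would invoke in the form of Stokes $L^s$ and Schauder estimates up to a slip boundary; the detailed verification in the present geometry is carried out in \cite[Theorem C]{Mu1}.
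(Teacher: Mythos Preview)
Your proposal is correct and aligns with the paper's own treatment: the paper gives no argument beyond the sentence ``One may refer to \cite[Theorem C]{Mu1} for the details of the proof,'' so your bootstrap sketch via $L^s$ and Schauder Stokes estimates under the full-slip condition, culminating in an appeal to \cite[Theorem C]{Mu1}, is exactly the intended route and in fact supplies more detail than the paper itself.
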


The boundedness of the $H^1$-norm of $\Bv=\Bu-\Bg$ implies the convergence of $\Bu$ to $\BU$ at far field. In particular, we can show the exponential convergence rate of the solution $\Bu$ as follows.
\begin{pro}\label{decay}
Let $\Bu=\Bv+\Bg$ be a solution to the problem \eqref{NS}, \eqref{far field}, and \eqref{BC}, which is obtained in Proposition \ref{existence}. Then there exist constants $C_2$ and $C_3$ such that  for any $T\ge 2\mfD+1$, it holds that
\begin{equation*}
	\|\Bu-\BU\|_{H^1(\Omega\cap \{|x_1|> T\})}\leq C_3e^{-C_2^{-1}T}.
\end{equation*}
\end{pro}
\begin{proof}
For any $t\ge 1+2\mfD$, if $k$ is much larger than $t$, we introduce the truncating function
\begin{equation}\label{2-45}
\zeta^+_k(x_1,t)=\left\{\begin{aligned}
&0 && \text{ if }x_1\in (-\infty,t-1),\\
&x_1-t+1 && \text{ if }x_1\in [t-1,t],\\
&1 && \text{ if }x_1\in (t, k),\\
&k+1 -x_1 && \text{if } x_1 \in [k, k+1], \\
&0 && \text{if} x_1 \in (k+1, \infty).
\end{aligned}\right.
\end{equation}
Denote
\begin{equation*}
E^+=\{\Bx\in\Omega:x_1\in (t-1,t)\}.
\end{equation*}
Clearly, $|\partial_{x_1}\zeta^+_k | =1$ in $E^+$ and $\Omega_{k, k+1}$.

According to the formula \eqref{A3-1}, one uses integration by parts to obtains
\begin{equation}\label{2-48}
\begin{aligned}
&\int_{\Omega }\zeta^+_k |\nabla\Bv|^2+\partial_{x_1}\zeta^+_k \partial_{x_1}\Bv\cdot \Bv\,dx-\int_{\partial\Omega }\zeta^+_k \Bn\cdot \nabla \Bv\cdot \Bv\,ds\\
=& \int_{\Omega }-\Delta \Bv\cdot(\zeta^+_k \Bv)\,dx=\int_{\Omega} -2{\rm div}\BD(\Bv)\cdot(\zeta^+_k \Bv)\,dx\\
=&\int_{\Omega }2 \BD(\Bv):\BD(\zeta^+_k \Bv)\,dx-\int_{\partial\Omega } 2\zeta^+_k \Bn\cdot\BD(\Bv)\cdot \Bv\,ds\\
=&\int_{\Omega }2 \BD(\Bv):\BD(\zeta^+_k \Bv)\,dx.
\end{aligned}
\end{equation}
Therefore, one has
\begin{equation}\label{2-50}
\begin{aligned}
\int_{\Omega }\zeta^+_k |\nabla\Bv|^2\,dx= &\int_{\Omega }2\BD(\Bv):\BD(\zeta^+_k \Bv)\,dx  -\int_{E^+}\partial_{x_1}\Bv\cdot \Bv\,dx  +
\int_{\Omega_{k, k+1}} \partial_{x_1} \Bv \cdot \Bv \, dx\\
 &+ \int_{\partial\Omega }\zeta^+_k  \Bn\cdot \nabla \Bv\cdot \Bv\,ds.
\end{aligned}
\end{equation}
The boundary condition $\Bv\cdot \Bn= 0$ also implies that  $\partial_{\tau }(\Bv\cdot \Bn)=0$ on the boundary $\partial\Omega$. Then one has
\begin{equation}\label{2-51}
\begin{aligned}
	\zeta^+_k  \Bn\cdot \nabla\Bv\cdot \Bv=&2\zeta^+_k  \Bn\cdot \BD(\Bv)\cdot \Bv-\zeta^+_k \Bv\cdot \nabla \Bv \cdot \Bn\\
=&-\zeta^+_k (\Bv\cdot  \Bt) [\partial_\tau (\Bv \cdot \Bn)-\Bv \cdot \partial_\tau\Bn]\\
=&\zeta^+_k (\Bv\cdot  \Bt)(\Bv \cdot \partial_\tau\Bn)  \ \ \ \ \ \ \ \ \ \ \ \ \ \ \ \ \ \ \ \ \ \ \text{ on }\partial\Omega.
\end{aligned}
\end{equation}
 Noting that $\partial_\tau\Bn=0$ on $\operatorname{supp}\zeta^+_k =\Omega_{t-1, k+1}$, one combines \eqref{2-50} and \eqref{2-51} to obtain
\begin{equation}\label{2-52}
\begin{aligned}
	\int_{\Omega }\zeta^+_k |\nabla\Bv|^2\,dx = \int_{\Omega }2\BD(\Bv):\BD(\zeta^+_k \Bv)\,dx- \int_{E^+}\partial_{x_1}\Bv\cdot \Bv\,dx
+ \int_{\Omega_{k, k+1} } \partial_{x_1}\Bv\cdot \Bv\,dx  .
\end{aligned}
\end{equation}
This, together with Lemma \ref{lemmaA1}, gives
\begin{equation}\label{2-53}
\begin{aligned}
	\int_{\Omega }\zeta^+_k |\nabla\Bv|^2\,dx\leq &
		\int_{\Omega}2\BD(\Bv):\BD(\zeta^+_k \Bv)\,dx+ \|\Bv\|_{L^2(E^+ \cup \Omega_{k, k+1} )}\|\nabla \Bv\|_{L^2(E^+ \cup \Omega_{k, k+1} )}  \\
		\leq& \int_{\Omega}2\BD(\Bv):\BD(\zeta^+_k  \Bv)\,dx+C\|\nabla \Bv\|_{L^2(E^+)}^2+  C \| \nabla \Bv\|_{L^2(\Omega_{k, k+1} )}^2.
\end{aligned}
\end{equation}
Taking the test function $\Bp=\zeta^+_k  \Bv$ in \eqref{2-44} and noting $ \nabla \Bg=0$ in $\operatorname{supp}\zeta^+_k =\Omega_{t-1, k+1}$, one has
\begin{equation}\label{2-46}
	\int_{\Omega}2\BD(\Bv):\BD(\zeta^+_k \Bv)+(\Bg+\Bv)\cdot \nabla \Bv\cdot (\zeta^+_k \Bv)\,dx-\int_{\Omega}p\operatorname{div}(\zeta^+_k \Bv)\,dx=0.
	\end{equation}
Moreover, using integration by parts and Lemmas \ref{lemmaA1}-\ref{lemmaA2} gives
\begin{equation}\label{2-55}
\begin{aligned}
& \left|\int_{\Omega}(\Bg+\Bv)\cdot \nabla \Bv\cdot (\zeta^+_k \Bv)\,dx\right|= \left|\int_{\Omega}\frac12 \partial_{x_1}\zeta^+_k  (g_1+v_1)|\Bv|^2\,dx\right|\\
\leq& \frac\Phi4\|\Bv\|_{L^2(E^+\cup \Omega_{k,k+1})}^2+\frac12\|v_1\|_{L^2(E^+)}\|\Bv\|_{L^4(E^+)}^2 + +\frac12\|v_1\|_{L^2( \Omega_{k, k+1})}\|\Bv\|_{L^4(\Omega_{k, k+1} )}^2\\
\leq& \frac\Phi4\|\nabla \Bv\|_{L^2(E^+\cup \Omega_{k,k+1})}^2  +C\|\nabla \Bv\|_{L^2(E^+)}^3
+ C\|\nabla \Bv\|_{L^2(\Omega_{k, k+1} )}^3\\
\leq& C\|\nabla \Bv\|_{L^2(E^+\cup \Omega_{k,k+1})}^2,
\end{aligned}
\end{equation}
where the boundedness
\begin{equation*}
	\|\Bv\|_{L^2(E^+\cup \Omega_{k,k+1})}\leq \|\Bv\|_{H^1(\Omega)}\leq C_1
\end{equation*}
has been used in the last inequality.

The most troublesome term involves the pressure $p$. Here we adapt a method introduced in \cite{LS}, by making use of the Bogovskii map. Note
\begin{equation*}
\int_{\Omega}p\operatorname{div}(\zeta^+_k \Bv)\,dx = \int_{\Omega}pv_1\partial_{x_1}\zeta^+_k  \,dx
= \int_{E^+} p v_1 \, dx  - \int_{\Omega_{k, k+1} } p v_1 \, dx.
\end{equation*}
 Since $v_1\in L_0^2(E^+)$, it follows from Lemma \ref{lemmaA5} that there exists a vector field $\Ba\in H_0^1(E^+)$ satisfying
\begin{equation*}
	\operatorname{div} \Ba=v_1 \ \ \ \ \text{in }E^+
\end{equation*}
 and
 \be \nonumber
 \|\nabla \Ba \|_{L^2( E^+)} \leq M_5\|v_1\|_{L^2( E^+)}.
 \ee
 Here $M_5=M_5(E^+)$ is a uniform constant since each $E^+$ is a star-like domain with respect to a ball with radius $\frac14$. One uses integration by parts and the equality \eqref{2-44} with $\Bp=\Ba$ to obtain
 \begin{equation*}
 \begin{aligned}
 & \left|\int_{E^+}pv_1 \,dx\right|=\left|\int_{E^+}p{\rm div}\,\Ba\,dx\right|\\
 =&\left|\int_{ E^+} 2\BD(\Bv):\BD(\Ba)+(\Bg+\Bv)\cdot \nabla \Bv\cdot \Ba\,dx\right|\\
 =&\left|\int_{ E^+} 2\BD(\Bv):\BD(\Ba)-(\Bg+\Bv)\cdot \nabla \Ba\cdot \Bv\,dx\right|\\
 \leq &C\left(\|\nabla \Bv\|_{L^2( E^+)}+\|\Bv\|_{L^2( E^+)}+\|\Bv\|_{L^4( E^+)}^2\right)\|\nabla\Ba\|_{L^2( E^+)}\\
 \leq&C\left(\|\nabla \Bv\|_{L^2( E^+)}+\|\Bv\|_{L^2( E^+)}+\|\Bv\|_{L^4( E^+)}^2\right)\|\Bv\|_{L^2( E^+)}
 \leq C \|\nabla \Bv\|_{L^2(E^+)}^2,
 \end{aligned}
 \end{equation*}
 where Lemmas \ref{lemmaA1} and \ref{lemmaA2}, and Proposition \ref{existence} have been used to get the last inequality.
 Similarly, one can prove that
 \begin{equation*}
 \left|\int_{\Omega_{k, k+1} }p v_1 \,dx\right| \leq C \|\nabla \Bv\|_{L^2(\Omega_{k, k+1} )}^2.
 \end{equation*}
Hence
 \begin{equation}\label{2-56}
\left|\int_{\Omega}p\operatorname{div}(\zeta^+_k \Bv)\,dx\right|\leq
	  C\|\nabla \Bv\|_{L^2( E^+\cup \Omega_{k, k+1} )}^2.
 \end{equation}
Combining \eqref{2-46} and \eqref{2-53}-\eqref{2-56} gives
\begin{equation}\label{2-57}
	\int_{\Omega }\zeta^+_k  |\nabla\Bv|^2\,dx\leq C \|\nabla \Bv\|_{L^2( E^+\cup \Omega_{k,k+1} )}^2.
\end{equation}
Let $k$ go to $+\infty$, one has
\begin{equation}\label{2-57-1}
\int_{\Omega} \zeta^+ |\nabla \Bv|^2 \, dx \leq C_2 \|\nabla \Bv\|_{L^2( E^+)}^2,
\end{equation}
where
\begin{equation*}
\zeta^+ (x_1,t)=\left\{\begin{aligned}
&0 && \text{ if }x_1\in (-\infty,t-1),\\
&x_1-t+1 && \text{ if }x_1\in [t-1,t],\\
&1 && \text{ if }x_1\in (t, \infty).
\end{aligned}\right.
\end{equation*}
Define
\begin{equation*}
	y^+(t)=\int_{\Omega }\zeta^+ |\nabla\Bv|^2\,dx.
\end{equation*}
The straightforward computations give
\begin{equation*}
	(y^+)'(t)=\int_{\Omega }\partial_t\zeta^+ |\nabla\Bv|^2\,dx=-\int_{E^+} |\nabla\Bv|^2\,dx.
\end{equation*}
Hence the energy inequality \eqref{2-57-1} can be rewritten as
\begin{equation*}
	y^+(t)\leq -C_2(y^+)'(t).
\end{equation*}
Integrating the inequality with respect to $t$ over $[2\mfD+1,T]$ for any $T>2\mfD+1$ and using Proposition \ref{existence} yield
\begin{equation*}
y^+(T)\leq e^{C_2(2\mfD+1)}y^+(2\mfD+1) e^{-C_2^{-1}T}\leq C_3  e^{-C_2^{-1}T}.
\end{equation*}
This, together with Lemma \ref{lemmaA1}, implies  that
\begin{equation*}
	\|\Bu-\BU\|_{H^1(\Omega\cap \{x_1> T\})}=\|\Bv\|_{H^1(\Omega\cap \{x_1>T\})}\leq y^+(T)\leq C_3e^{-C_2^{-1}T}.
\end{equation*}
Similarly, one can also prove
\begin{equation*}
	\|\Bu-\BU\|_{H^1(\Omega\cap \{x_1<- T\})}\leq C_3e^{-C_2^{-1}T}.
\end{equation*}
Hence the proof of the proposition is completed.
\end{proof}

\section{Uniqueness of solutions}\label{secunique}
In this section, the uniqueness of the solution obtained in Proposition \ref{existence} is proved. We first show that the Dirichlet norm of the solution $\Bu$ is uniformly bounded in any sub-domain $\Omega_{t-1,t}$.
\begin{lemma}\label{uniform estimate}
Let $\Bu$ be the solution obtained in Proposition \ref{existence}. Then there exists a constant $C_6$ such that for any $t\in\R$, it holds that
\begin{equation*}
	\|\Bu\|_{H^1(\Omega_{t-1,t})}+\|\Bu\|_{L^4(\Omega_{t-1,t})}\leq C_6
\end{equation*}
and
\begin{equation*}
	\|\nabla \Bu\|_{L^2(\Omega_{|t|})}\leq C_6.
\end{equation*}
 In particular, there exists a constant $\Phi_1$ such that if $\Phi\in [0,\Phi_1)$, then
 \begin{equation*}
C_6\leq C\Phi.	
 \end{equation*}
\end{lemma}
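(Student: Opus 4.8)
The plan is to use the decomposition $\Bu=\Bg+\Bv$ furnished by Proposition~\ref{existence}, where $\Bv\in H_\sigma(\Omega)$ satisfies $\|\Bv\|_{H^1(\Omega)}\le C_3$ and $\Bg$ is the flux carrier of Section~\ref{secflux}, and to estimate each piece separately on an arbitrary unit strip $\Omega_{t-1,t}$ by constants independent of $t$. The bound on $\Omega_t$ then follows by covering $\Omega_t$ with $O(t)$ such strips and summing the local estimates.

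Two preliminary facts supply the needed $t$-uniformity. First, since $f_1,f_2$ are smooth with $f_1\equiv-1$, $f_2\equiv1$ for $|x_1|\ge L$, the width $f=f_2-f_1$ obeys $0<m_0\le f\le\|f\|_{L^\infty(\R)}<\infty$ with $m_0:=\min\{2,\min_{[-L,L]}(f_2-f_1)\}$, and $\|(f_1',f_2')\|_{L^\infty(\R)}<\infty$; hence $|\Omega_{t-1,t}|\le\|f\|_{L^\infty(\R)}$ uniformly in $t$, and---the crucial point---applying Lemmas~\ref{lemmaA1} and~\ref{lemmaA2} on the translated strip $\Omega_{t-1,t}$ (for which $b-a=1$ and one may take $m_{t-1,t}=m_0$) yields constants $M_1,M_4$ that can be chosen independent of $t$. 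Second, from the explicit construction of $\Bg$ in Section~\ref{secflux}, the bounds $|\nabla G|,|\nabla^2 G|\le C(\varepsilon)\Phi$ of Lemma~\ref{lemma1}, the cutoff estimates \eqref{2-2-1}, and the fact that $\Bg\equiv\frac{\Phi}{2}\Be_1$ outside $\Omega_{\frac{7\mfd}{4}}$, a direct inspection gives $\|\Bg\|_{L^\infty(\Omega)}\le C(\varepsilon,\mfd)\Phi$, while Lemma~\ref{lemma2} gives $\|\nabla\Bg\|_{L^2(\Omega)}\le C(\varepsilon,\mfd)\Phi$; consequently $\|\Bg\|_{H^1(\Omega_{t-1,t})}+\|\Bg\|_{L^4(\Omega_{t-1,t})}\le C(\varepsilon,\mfd)\Phi$ uniformly in $t$.

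Granting these, the argument is short. By the triangle inequality, $\|\Bu\|_{H^1(\Omega_{t-1,t})}\le\|\Bg\|_{H^1(\Omega_{t-1,t})}+\|\Bv\|_{H^1(\Omega)}\le C(\varepsilon,\mfd)\Phi+C_3$. For the $L^4$ term I would first note that every $\Bv\in H_\sigma(\Omega)$ is solenoidal, tangent to $\partial\Omega$, and has vanishing flux through each cross-section---being an $H^1$-limit of compactly supported solenoidal fields, for which $\int_{\Sigma(x_1)}v_1\,dx_2$ is independent of $x_1$ and zero for $|x_1|$ large---so that $\Bv|_{\Omega_{t-1,t}}\in H^1_*(\Omega_{t-1,t})$ and $\Bv\cdot\Bn=0$ on $\partial\Omega_{t-1,t}\cap\partial\Omega$; Lemma~\ref{lemmaA2} then gives $\|\Bv\|_{L^4(\Omega_{t-1,t})}\le M_4\|\nabla\Bv\|_{L^2(\Omega)}\le M_4C_3$, and adding $\|\Bg\|_{L^4(\Omega_{t-1,t})}$ completes the first estimate with $C_6:=C(\varepsilon,\mfd)\Phi+(1+M_4)C_3$. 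For the second estimate, cover $\Omega_t$ by the unit strips $\Omega_{j-1,j}$, $|j|\le\lceil t\rceil$, and sum the local bound: $\|\nabla\Bu\|_{L^2(\Omega_t)}^2\le(2\lceil t\rceil+2)C_6^2$, whence $\|\nabla\Bu\|_{L^2(\Omega_t)}\le C_6(1+t^{1/2})$ after relabelling $C_6$. (In fact $\|\nabla\Bu\|_{L^2(\Omega_t)}\le\|\nabla\Bg\|_{L^2(\Omega)}+\|\nabla\Bv\|_{L^2(\Omega)}\le C_6$ holds $t$-independently, but the weaker growth is all that is needed later.) Finally, for small $\Phi$ one may keep $\varepsilon,\mfd$ fixed, so the $\Bg$-contributions are $O(\Phi)$ and $C_3=O(\Phi)$ by Proposition~\ref{existence}; hence $C_6=O(\Phi)$ as $\Phi\to0$.

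The only step that is not completely routine is the claim that $M_1$ and $M_4$ in Lemmas~\ref{lemmaA1}--\ref{lemmaA2} can be taken independent of $t$, and this is exactly what the stated dependence of those constants---on $\|f\|_{L^\infty(a,b)}$, $\|(f_1',f_2')\|_{L^\infty(a,b)}$, $b-a$, and $m_{a,b}$---provides, since on $\Omega_{t-1,t}$ these four quantities are controlled by $\|f\|_{L^\infty(\R)}$, $\|(f_1',f_2')\|_{L^\infty(\R)}$, $1$, and $m_0$, none depending on $t$. Past that point the proof is just the triangle inequality together with the two preliminary bounds, and the resulting constant $C_6$ depends only on $\Phi$ and $\|f_i\|_{C^2(\R)}$.
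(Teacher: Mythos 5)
Your proof is correct and follows essentially the same route as the paper: decompose $\Bu=\Bg+\Bv$, use the global bound $\|\Bv\|_{H^1(\Omega)}\le C_3$ together with the $t$-uniform constants in Lemmas \ref{lemmaA1}--\ref{lemmaA2} on unit strips, and control $\Bg$ by the pointwise bounds $|\Bg|,|\nabla\Bg|\le C(\varepsilon,\mfd)\Phi$ coming from its explicit construction. Your added checks (zero cross-sectional flux of $\Bv$ so Lemma \ref{lemmaA2} applies, and the strip-summation for the $\Omega_t$ bound, which the paper instead gets from $|\Omega_t|\sim t$) are only more detailed versions of the same argument.
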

\begin{proof} Write $\Bu=\Bg+\Bv$ with $\Bv\in \mcH_\sigma(\Omega)$. By Proposition \ref{existence}, one has
\begin{equation}\label{3-8}
\|\nabla \Bv\|_{L^2(\Omega_{t-1,t})}\leq \|\Bv\|_{H^1(\Omega)}\leq C_1.
\end{equation}	
Using Lemma \ref{lemmaA2}, one has
\begin{equation}\label{3-9}
\|\Bv\|_{L^4(\Omega_{t-1,t})}\leq C\|\nabla \Bv\|_{L^2(\Omega_{t-1,t})}\leq C.
\end{equation}		
On the other hand, it follows from the definition \eqref{2-12} and \eqref{2-11} of $\Bg$ that one has
\begin{equation*}
	|\Bg|+|\nabla \Bg|\leq C(\varepsilon,\mfD)\Phi.
\end{equation*}
In particular, the constant $C(\varepsilon,\mfD)\Phi$ goes to zero of the same order of $\Phi$ as $\Phi\to 0$. Thus,
\begin{equation}\label{3-10}
	\|\Bg\|_{H^1(\Omega_{t-1,t})}+\|\Bg\|_{L^4(\Omega_{t-1,t})}\leq C(\varepsilon,\mfD)\Phi
\end{equation}
and
\begin{equation}\label{3-11}
	\|\nabla \Bg\|_{L^2(\Omega_{|t|})}\leq C(\varepsilon,\mfD)\Phi.
\end{equation}
Combining \eqref{3-8}-\eqref{3-11}, we finish the proof of this lemma.
\end{proof}

With the help of the uniform estimate obtained in Lemma \ref{uniform estimate}, we can prove the uniqueness of the solution when the flux is sufficiently small.
\begin{pro}\label{uniqueness}
Let $\Bu$ be the solution obtained in Theorem \ref{bounded channel}. Assume that $\widetilde{\Bu}$ is also a smooth solution of problem \eqref{NS}, \eqref{far field}, and \eqref{BC} satisfying
\begin{equation*}
\liminf_{t\to \infty} t^{-3}\|\nabla \widetilde{\Bu}\|_{L^2(\Omega_{t})}^2=0.
\end{equation*}
There exists a constant $\Phi_0>0$ such that if $\Phi\in [0,\Phi_0)$, then $\Bu=\widetilde{\Bu}$.
\end{pro}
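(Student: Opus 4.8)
The plan is to run a Saint-Venant type energy estimate on the difference field and to close it with the differential inequality of Lemma~\ref{lemmaA4}. Set $\Bw=\Bu-\widetilde{\Bu}$. Since $\Bu$ and $\widetilde{\Bu}$ carry the same flux $\Phi$, $\Bw$ is divergence free, has zero flux through every cross section, satisfies $\Bw\cdot\Bn=0$ and $\Bn\cdot\BD(\Bw)\cdot\Bt=0$ on $\partial\Omega$, and solves
\begin{equation*}
-\Delta\Bw+\widetilde{\Bu}\cdot\nabla\Bw+\Bw\cdot\nabla\Bu+\nabla q=0,\qquad \operatorname{div}\Bw=0\quad\text{in }\Omega,
\end{equation*}
for some $q\in L^2_{loc}(\Omega)$ (Proposition~\ref{pressure}). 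Because $\Bu-\BU$ lies in $H^1(\Omega)$ and decays exponentially (Proposition~\ref{decay}), the growth hypothesis on $\widetilde{\Bu}$ passes to $\Bw$, i.e. $\liminf_{t\to\infty}t^{-3}\|\nabla\Bw\|_{L^2(\Omega_t)}^2=0$.

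For $t\ge 2\mfd+1$ I would take the Lipschitz cut-off $\zeta_t(x_1)$ equal to $1$ for $|x_1|\le t$, to $t+1-|x_1|$ for $t<|x_1|<t+1$, and to $0$ for $|x_1|\ge t+1$, set $E_t=\{\Bx\in\Omega:\ t<|x_1|<t+1\}$ and $z(t)=\int_\Omega\zeta_t|\nabla\Bw|^2\,dx$, so that $z$ is nondecreasing and $z'(t)=\int_{E_t}|\nabla\Bw|^2\,dx$. Testing the equation for $\Bw$ against $\zeta_t\Bw$ and repeating the full-slip computation used in Lemma~\ref{lemmaA3} and Proposition~\ref{decay} (the boundary terms drop because $\partial_\Bt\Bn=0$ on $\operatorname{supp}\zeta_t$) turns the left side into $z(t)$ and leaves on the right an annulus term $-\int_{E_t}\partial_{x_1}\zeta_t\,\partial_{x_1}\Bw\cdot\Bw\,dx$ (bounded by $Cz'(t)$ via Lemma~\ref{lemmaA1}), the convective contributions, and the pressure contribution $\int_\Omega q\,w_1\,\partial_{x_1}\zeta_t\,dx$.

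The heart of the matter is the convective terms, and I would control them through the decomposition $\widetilde{\Bu}=\frac\Phi2\Be_1-\Bw+(\Bu-\frac\Phi2\Be_1)$, in which $\Bu-\frac\Phi2\Be_1$ is bounded on $\overline{\Omega}$ and exponentially small at infinity. After integration by parts, $\int\zeta_t(\widetilde{\Bu}\cdot\nabla\Bw)\cdot\Bw$ and $\int\zeta_t(\Bw\cdot\nabla\Bu)\cdot\Bw$ split into: pieces supported on $E_t$ of size $\le C\Phi z'(t)$ (those carrying $\frac\Phi2\Be_1$ or $\Bu$, where $\|\Bu\|_{L^4(\Omega_{t-1,t})}\le C_6$ by Lemma~\ref{uniform estimate} and $\|\Bw\|_{L^2(E_t)}\le M_1\|\nabla\Bw\|_{L^2(E_t)}$ by Lemma~\ref{lemmaA1}, whose constant does not see the length of the slice); a genuinely cubic term $\int_{E_t}|\Bw|^3\,dx\le M_4^2M_1\|\nabla\Bw\|_{L^2(E_t)}^3=Cz'(t)^{3/2}$ via Lemma~\ref{lemmaA2}; and a single non-localized term $\int_\Omega\zeta_t(\Bw\cdot\nabla\Bw)\cdot(\Bu-\frac\Phi2\Be_1)\,dx$ bounded by $\|\Bu-\frac\Phi2\Be_1\|_{L^\infty(\Omega)}M_1(z(t)+z'(t))$, the prefactor tending to $0$ as $\Phi\to0$ by Propositions~\ref{existence} and~\ref{regularity}. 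The pressure term is handled exactly as in Proposition~\ref{decay}: on each of the slices $\Omega_{t,t+1}$ and $\Omega_{-t-1,-t}$ one has $w_1\in L_0^2$, so the Bogovskii operator (Lemma~\ref{lemmaA5}, with $M_5$ uniform) together with the equation for $\Bw$ and the above bounds on $\widetilde{\Bu}$ give a contribution $\le Cz'(t)+Cz'(t)^{3/2}$.

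Collecting these estimates yields, for all $t\ge 2\mfd+1$,
\begin{equation*}
z(t)\le C(\Phi)\,z(t)+C\,z'(t)+C\,z'(t)^{3/2},
\end{equation*}
with $C(\Phi)\to0$ as $\Phi\to0$. Fixing $\Phi_0$ so that $C(\Phi)\le\frac12$ for $\Phi<\Phi_0$, I absorb the first term and obtain $z(t)\le\Psi(z'(t))$ with $\Psi(\tau)=2C\tau+2C\tau^{3/2}\le c_0\tau^{3/2}$ for $\tau\ge1$. If $\Bw\not\equiv0$, then $z$ is nontrivial, and Lemma~\ref{lemmaA4} with $m=\frac32$ forces $\liminf_{t\to\infty}t^{-3}z(t)>0$; but $z(t)\le\|\nabla\Bw\|_{L^2(\Omega_{t+1})}^2$, contradicting the property recorded in the first paragraph. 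Hence $\nabla\Bw\equiv0$; being a constant vector with zero flux (so its first component vanishes) and satisfying $\Bw\cdot\Bn=0$ on the flat part of $\partial\Omega$ (so its second component vanishes), $\Bw\equiv0$, i.e. $\Bu=\widetilde{\Bu}$. The main obstacle is exactly this non-localized convective term together with the calibration of the exponent: the comparison solution is only controlled by the weak growth condition, so one must never let $\widetilde{\Bu}$ (or $\Bu$) appear undifferentiated against a factor that is not supported in $E_t$, and it is the decomposition above and the flux-free, length-independent Poincar\'e inequality that force every surviving term to be either $O(\Phi)\,z(t)$ or a polynomial in $z'(t)$ of degree at most $\frac32$ — precisely the threshold the uniqueness class $\liminf_{t\to\infty}t^{-3}\|\nabla\Bw\|_{L^2(\Omega_t)}^2=0$ can absorb.
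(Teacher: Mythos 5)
Your overall architecture --- a Saint--Venant energy estimate for the difference field with a linear cut-off, a Bogovskii argument for the pressure, and Lemma \ref{lemmaA4} with $m=\tfrac32$ against the growth class --- is exactly the paper's. The genuine gap is in your treatment of the decisive non-localized convective term $\int_\Omega\zeta_t(\Bw\cdot\nabla\Bw)\cdot(\Bu-\tfrac\Phi2\Be_1)\,dx$: you bound it by $\|\Bu-\tfrac\Phi2\Be_1\|_{L^\infty(\Omega)}\,M_1\,(z(t)+z'(t))$ and assert the prefactor tends to $0$ with $\Phi$ ``by Propositions \ref{existence} and \ref{regularity}''. Neither result gives this. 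Proposition \ref{existence} only provides $\|\Bv\|_{H^1(\Omega)}\le C_3$ with $C_3=O(\Phi)$, and in two dimensions $H^1$ does not embed into $L^\infty$; Proposition \ref{regularity} is purely qualitative ($C^\infty$ smoothness) and carries no bound with any $\Phi$-dependence. To make your prefactor small you would need a uniform-in-$x_1$ higher-order estimate for $\Bv$ with explicit dependence on $\Phi$, which is established nowhere. The paper avoids pointwise bounds altogether: it decomposes $\Omega_{t-1}$ into unit slices $\Omega_t^i$ and estimates $\int_{\Omega_t^i}|\oBu\cdot\nabla\oBu\cdot\Bu|\,dx\le\|\nabla\oBu\|_{L^2(\Omega_t^i)}\|\oBu\|_{L^4(\Omega_t^i)}\|\Bu\|_{L^4(\Omega_t^i)}$, using the zero-flux Sobolev inequality of Lemma \ref{lemmaA2} for the difference and the slice-wise bound $\|\Bu\|_{L^4(\Omega_t^i)}\le C_6=O(\Phi)$ of Lemma \ref{uniform estimate}; summing yields $C_7\|\nabla\oBu\|_{L^2(\Omega_{t-1})}^2$ with $C_7=O(\Phi)$, which is absorbed by the left-hand side. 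You already use these ingredients for the $E_t$-localized pieces, so your argument is repaired by applying the same slice decomposition to the non-localized term; as written, however, the smallness mechanism --- the very point where $\Phi_0$ enters --- is unjustified.

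A second, smaller error: you claim the curvature boundary term drops ``because $\partial_\Bt\Bn=0$ on $\operatorname{supp}\zeta_t$''. That is correct in Proposition \ref{decay}, where the cut-off is supported in the straight outlet, but false here: $\zeta_t\equiv1$ on $\Omega_t\supset\Omega_{L+1}$, which contains the curved portion of $\partial\Omega$. The term $\int_{\partial\Omega}\zeta_t(\Bw\cdot\Bt)(\Bw\cdot\partial_\Bt\Bn)\,dS$ therefore survives, and one cannot simply ``turn the left side into $z(t)$''; it must be absorbed through the trace/Korn-type inequality from the proof of Lemma \ref{lemmaA3} (this is precisely how the constant $\mfc$ enters the paper's inequality in Step 3 of its proof). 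Since you cite that lemma anyway, the fix is available, but the step as you state it is wrong and the absorption is missing.
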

\begin{proof} We divide the proof into five steps.

{\em Step 1. Set up. } The straightforward computations show that $\Bw:=\widetilde{\Bu}-\Bu$ is a solution to the equations
\begin{equation}\label{3-15}
\left\{\begin{aligned}
&-\Delta \Bw +\Bw \cdot \nabla\Bu +\Bu\cdot \nabla \Bw +\Bw\cdot \nabla \Bw +\nabla p=0  ~~~~&&\text{ in }\Omega,\\
&{\rm div}~\Bw =0&&\text{ in }\Omega,\\
&\Bw \cdot \Bn=0,~\Bn\cdot \BD(\Bw )\cdot \Bt=0&&\text{ on } \partial\Omega,\\
&\int_{\Sigma(x_1)}\Bw  \cdot \Bn \,ds=0 &&\text{ for any }x_1\in \R.
\end{aligned}\right.
\end{equation}
Then we introduce the truncating function $\zeta(x,t)$ with $t\ge L+2$ on $\Omega$ as follows.
\begin{equation*}
\zeta(x,t)=\left\{
\begin{aligned}
&1,~~~~~~~~~~~~~~ &&\text{ if }x_1\in (-t+1,t-1),\\
&0,~~~~~~~~~~~~~~ &&\text{ if }x_1\in (-\infty,-t)\cup(t,\infty),\\
&t-x_1,~~~~~~&&\text{ if }x_1\in [t-1,t],\\
&t+x_1,~~~~~~&&\text{ if }x_1\in [-t,-t+1].
\end{aligned}\right.
\end{equation*}
Clearly, $\zeta$ depends only on $t$ and $x_1$. Furthermore, $\partial_{t}\zeta=|\partial_{x_1}\zeta| =1$ in $E=E^+\cup E^-$, where
\begin{equation*}
 E^-=\{\Bx\in\Omega:x_1\in (-t,-t+1)\}\text{ and } E^+=\{\Bx\in\Omega:x_1\in (t-1,t)\}.
\end{equation*}
{\em Step 3. Energy estimates.}
Multiply the first equation in \eqref{3-15} by $\zeta\Bw$ and integrating the result equation over $\Omega$. Using integration by parts, one has
\begin{equation}\label{3-16}
\int_{\Omega}2\BD(\Bw):\BD (\zeta\Bw)+(\Bw \cdot \nabla \Bu +(\Bu+\Bw)\cdot \nabla \Bw)\cdot (\zeta\Bw)- p w_1\partial_{x_1}\zeta\,dx=0.
\end{equation}

Similar to the proof of the equality \eqref{2-52} in Proposition \ref{decay}, one can also obtain
\begin{equation}\label{3-3}
	\int_{\Omega }\zeta |\nabla\Bw|^2\,dx=\int_{\Omega }2\BD(\Bw):\BD(\zeta \Bw)\,dx-\int_{E}\partial_{x_1}\Bw\cdot \Bw\,dx+\int_{\partial\Omega}\zeta  (\Bw\cdot  \Bt)(\Bw \cdot \partial_\tau\Bn)\,ds.
\end{equation}
Noting that $\partial_\tau\Bn=0$ on $\partial\Omega\setminus \partial\Omega_{L+1}$ and $\zeta =1$ in $\Omega_{L+1}$, it follows from \eqref{3-3} that one has
\begin{equation}\label{3-6}
	\begin{aligned}
		\int_{\Omega }\zeta |\nabla\Bw|^2\,dx
\leq &\int_{\Omega }2\BD(\Bw):\BD(\zeta \Bw)\,dx-\int_E\partial_{x_1}\Bw\cdot \Bw\,dx+ C_4\int_{\partial\Omega\cap \partial\Omega_{L+1}}|\Bw|^2\,ds,
\end{aligned}
\end{equation}
where $C_4$ is defined in \eqref{defC4}.
Following the proof of \eqref{A3-6} in Lemma \ref{lemmaA3}, one has
\begin{equation*}
	C_4\int_{\partial\Omega\cap \partial\Omega_{L+1}}|\Bw|^2\,ds\leq \frac12 \|\nabla\Bw\|_{L^2(\Omega_{L+1})}^2+C_5\|\BD(\Bw)\|_{L^2(\Omega_{L+1})}^2,
\end{equation*}
where $C_5$ is a constant independent of $t$. This, together with \eqref{3-6} and Lemma \ref{lemmaA1}, gives
\begin{equation*}
\begin{aligned}
	\frac12 \int_{\Omega }\zeta |\nabla\Bw|^2\,dx\leq &
	(2+C_5)\int_{\Omega}\BD(\Bw):\BD(\zeta \Bw)\,dx+ C \|\nabla \Bw\|_{L^2(E)}\|\Bw\|_{L^2(E)}\\
	\leq &
	(2+C_5)\int_{\Omega}\BD(\Bw):\BD(\zeta \Bw)\,dx+ C\|\nabla \Bw\|_{L^2(E)}^2.
\end{aligned}
\end{equation*}
Hence one has
\begin{equation}\label{3-7}
	\mfc\int_{\Omega }\zeta |\nabla\Bw|^2\,dx\leq
	\int_{\Omega}2\BD(\Bw):\BD(\zeta \Bw)\,dx+ C\|\nabla \Bw\|_{L^2(E)}^2,
	\end{equation}
	where $\mfc$ is defined in \eqref{defc}.
Moreover, one uses integration by parts, Lemmas \ref{lemmaA1}-\ref{lemmaA2} and Proposition \ref{uniform estimate} to obtain
\begin{equation}\label{3-19}
\begin{aligned}
	-\int_{\Omega}(\Bu\cdot \nabla \Bw +\Bw\cdot \nabla \Bw )\cdot (\zeta\Bw) \,dx
	=&\int_{ E}\frac12|\Bw |^2(u_1+ w_1)\partial_{x_1}\zeta\,dx\\
	\leq& \|\Bw\|_{L^4(E)}^2(\|\Bw\|_{L^2(E)}+\|\Bu\|_{L^2(E)})\\
	\leq& C\|\nabla \Bw\|_{L^2(E)}^3+C\|\nabla \Bw\|_{L^2(E)}^2
\end{aligned}
\end{equation}
and
\begin{equation}\label{3-18}
	\begin{aligned}
		&-\int_{\Omega}\Bw \cdot\nabla \Bu\cdot (\zeta \Bw) \,dx\\
		=&\int_{\Omega}\zeta\Bw \cdot\nabla \Bw \cdot \Bu\,dx+\int_{ E}(\Bw \cdot \Bu) w_1\partial_{x_1}\zeta\,dx\\
		=&\int_{\Omega_{t-1}}\Bw \cdot\nabla \Bw \cdot \Bu\,dx+\int_{ E}\zeta \Bw \cdot\nabla \Bw \cdot \Bu+(\Bw \cdot \Bu) w_1\partial_{x_1}\zeta\,dx\\
		\leq &\int_{\Omega_{t-1}}\Bw \cdot\nabla \Bw \cdot \Bu\,dx+(\|\nabla \Bw\|_{L^2(E)}+\|\Bw\|_{L^2(E)})\|\Bw\|_{L^4(E)}\|\Bu\|_{L^4(E)}\\
		\leq& \int_{\Omega_{t-1}}\Bw \cdot\nabla \Bw \cdot \Bu\,dx+C\|\nabla \Bw\|_{L^2(E)}^2.
	\end{aligned}
	\end{equation}
Decompose $\Omega_{t-1}$ into several parts $D_t^i=\{\Bx\in\Omega:~x_1\in(A_{i-1},A_i)\}$, where $-t+1 =A_0\leq A_1\leq \cdots\leq A_{N(t)}=t-1 $ and $\frac12\leq A_i-A_{i-1}\leq 1$ for every $i$. By Lemma \ref{lemmaA2} and Lemma \ref{uniform estimate}, one has
\begin{equation*}
\begin{aligned}
 \int_{\Omega_{t-1}} \Bw \cdot\nabla \Bw \cdot \Bu \,dx \leq & \sum_{i=1}^{N(t)}\int_{D_t^i }|\Bw \cdot\nabla \Bw \cdot \Bu|\,dx \\
\leq &  \sum_{i=1}^{N(t)}\|\nabla\Bw \|_{L^2(D_t^i )} \|\Bw \|_{L^4(D_t^i )}\|\Bu\|_{L^4(D_t^i )} \\
\leq&C_7\sum_{i=1}^{N(t)}\|\nabla\Bw \|_{L^2(D_t^i )}^2 \\
=&C_7\int_{\Omega_{t-1}} |\nabla\Bw |^2 \, dx.
\end{aligned}
\end{equation*}
By virtue of Lemma \ref{uniform estimate}, the constant $|C_7|\leq C\Phi$ if $\Phi$ is sufficiently small. Then there exists a $\Phi_0>0$ such that for any $\Phi\in [0,\Phi_0)$, one has
\begin{equation}\label{3-23}
\int_{\Omega_{t-1}}\Bw \cdot\nabla \Bw \cdot \Bu\,dx \leq \frac{\mfc}{2}\int_{\Omega}\zeta|\nabla\Bw |^2\,dx.
\end{equation}

%Moreover, according to Lemma \ref{lemma1} and Proposition \ref{uniform estimate}, it holds that
%\begin{equation}\label{3-23-1}
%	\|\Bu\|_{L^4( E)}\leq \|\Bv\|_{L^4( E)}+\|\Bg\|_{L^4( E)}\leq M_4\|\nabla\Bv\|_{L^2( E)}+\|\Bg\|_{L^4( E)}\leq C
%\end{equation}
%and
%\begin{equation}\label{3-23-2}
%	\|\Bu\|_{L^2( E)}\leq \|\Bv\|_{L^2( E)}+\|\Bg\|_{L^2( E)}\leq M_1\|\nabla\Bv\|_{L^2( E)}+\|\Bg\|_{L^2( E)}\leq C,
%\end{equation}
%since the constants $M_1,M_4$ are uniformly bounded with respect to $t$.

{\em Step 4. Estimate for pressure term.} For the term involving pressure, similar to the proof of Proposition \ref{decay}, there exists a vector field $\Ba \in H_0^1(E^\pm)$ satisfying
\begin{equation*}
\operatorname{div} \Ba= w_1 \ \ \ \ \ \ \text{ in }\, E^\pm
\end{equation*}
and
\begin{equation*}
\|\nabla \Ba\|_{L^2(E^\pm)}\leq M_5\| w_1\|_{L^2(E^\pm)}.
\end{equation*}
Then one uses integration by parts and the equation \eqref{3-15} to obtain
\begin{equation*}
\begin{aligned}
&\left|\int_{E^\pm }p w_1\partial_{x_1}\zeta\,dx\right|= \left|\int_{ E^\pm}p w_1\,dx\right|=\left|\int_{E^\pm}p{\rm div}\,\Ba\,dx\right|\\
=&\left|\int_{ E^\pm} (-\Delta \Bw +\Bw \cdot \nabla\Bu +\Bu\cdot \nabla \Bw +\Bw\cdot \nabla \Bw )\cdot\Ba\,dx\right|\\
=&\left|\int_{ E^\pm} \nabla \Bw:\nabla \Ba-\Bw \cdot \nabla \Ba\cdot\Bu -(\Bu+\Bw)\cdot \nabla \Ba \cdot \Bw\,dx\right|\\
\leq &C\left(\|\nabla \Bw \|_{L^2( E^\pm)}+\|\Bw \|_{L^4( E^\pm)}\|\Bu\|_{L^4( E^\pm)}+\|\Bw \|_{L^4( E^\pm)}^2\right)\|\nabla\Ba\|_{L^2( E^\pm)}\\
\leq&C\left(\|\nabla \Bw \|_{L^2( E^\pm)}+\|\Bw \|_{L^4( E^\pm)}\|\Bu\|_{L^4( E^\pm)}+\|\Bw \|_{L^4( E^\pm)}^2\right)\| w_1\|_{L^2( E^\pm)}.
\end{aligned}
\end{equation*}
Using Lemmas \ref{lemmaA1}, \ref{lemmaA2}, and \ref{uniform estimate}, one has
\begin{equation}\label{3-25}
	\left|\int_{ E^\pm}p w_1\partial_{x_1}\zeta\,dx\right|\leq
	 C\|\nabla \Bw \|_{L^2( E^\pm)}^2+C\|\nabla \Bw \|_{L^2( E^\pm)}^3.
\end{equation}
Combining \eqref{3-16} and \eqref{3-7}-\eqref{3-25} gives
\begin{equation}\label{3-26}
\frac{\mfc}{2}\int_{\Omega}\zeta|\nabla\Bw |^2\,dx\leq C\|\nabla \Bw\|_{L^2(E)}^2+C\|\nabla \Bw\|_{L^2(E)}^3.
\end{equation}

{\em Step 5. Growth estimate. } Define
\begin{equation*}
y(t)=\int_{\Omega}\zeta|\nabla \Bw |^2\,dx.
 \end{equation*}
The straightforward computations give
\begin{equation*}
	y'(t)=\int_{\Omega}\partial_t\zeta|\nabla \Bw |^2\,dx=\int_{E}|\nabla \Bw |^2\,dx.
	\end{equation*}
Then the energy inequality \eqref{3-26} can also be written as
\begin{equation*}
y(t)\leq C_8 \left\{y'(t)+[y'(t)]^\frac32\right\}.
\end{equation*}
Set
\begin{equation*}
	\Psi(\tau)=C_8(t+t^\frac32) \ \ \ \text{and}\ \ \  m=\frac32.
\end{equation*}
It follows from Lemma \ref{lemmaA4} that  either $\Bw=0$ or
\begin{equation*}
\liminf_{t\rightarrow +\infty} \frac{y(t)}{t^3}>0.
\end{equation*}
This finishes the proof of the proposition.
\end{proof}
Combining Propositions \ref{existence}, \ref{decay}, and \ref{uniqueness}, we finish the proof of Theorem \ref{bounded channel}.

%%%%%%%%%%%%%%%%%%%%%%%%%%Appendix%%%%%%%%%%%%%%%%%%%%%%%%%%%%%%%%%%%%%%%%%%%%%%

%%%%%%%%%%%%%%%%%%%%%%%%%%%%%%%%%%%%%%%%%%%%%%%%%%%%%%%%%%%%%%%%%%%%

\medskip

{\bf Acknowledgement.}
This work is financially supported by the National Key R\&D Program of China, Project Number 2020YFA0712000.
The research of Wang was partially supported by NSFC grant 12171349. The research of  Xie was partially supported by  NSFC grant 11971307, and Natural Science Foundation of Shanghai 21ZR1433300, Program of Shanghai Academic Research Leader 22XD1421400.

\end{document}